\begin{document}

\title*{A Lotka--Volterra type model analyzed through different techniques\thanks{Accepted
22-June-2023 for publication in a Springer Book.}}
\toctitle{A Lotka--Volterra type model analyzed through different techniques}
\titlerunning{A Lotka--Volterra type model analyzed through different techniques}

\author{Jorge Pinto$^1$ 
\and Sandra Vaz$^2$ 
\and Delfim F. M. Torres$^{3, 4,}$\thanks{Corresponding author: delfim@ua.pt}}
\tocauthor{Jorge Pinto, Sandra Vaz and Delfim F. M. Torres}
\authorrunning{J. Pinto, S. Vaz and D. F. M. Torres}

\institute{Department of Mathematics, University of Aveiro,\\
3810-193 Aveiro, Portugal, jorge.a.pinto@ua.pt
\and 
Center of Mathematics and Applications (CMA-UBI),\\
Department of Mathematics, University of Beira Interior,\\
6201-001 Covilh\~{a}, Portugal, svaz@ubi.pt
\and
Center for Research and Development in Mathematics and Applications (CIDMA), 
Department of Mathematics, University of Aveiro,\\  
3810-193 Aveiro, Portugal, delfim@ua.pt
\and
Research Center in Exact Sciences (CICE), 
Faculty of Sciences and Technology (FCT), 
University of Cape Verde (Uni-CV), 
7943-010 Praia, Cape Verde, delfim@unicv.cv}

\maketitle


\abstract{We consider a modified Lotka--Volterra model applied 
to the predator-prey system that can also be applied to other areas, 
for instance the bank system. We show that the model is  well-posed 
(non-negativity of solutions and conservation law) and study the local 
stability using different methods. Firstly we consider the continuous model, 
after which the numerical schemes of Euler and Mickens are investigated. 
Finally, the model is described using Caputo fractional derivatives. 
For the fractional model, besides well-posedness and local stability, 
we prove the existence and uniqueness of solution. Throughout the work 
we compare the results graphically and present our conclusions. 
To represent graphically the solutions of the fractional model  
we use the modified trapezoidal method that involves the 
modified Euler method.

\keywords{Lotka--Volterra model; 
Non-negativity of solutions; 
Stability; 
Mickens discretization; 
Fractional calculus.}
	
\medskip

\noindent {\bf MSC Classification 2020:} 34A08, 65L10, 65L12, 65L20, 65L70.}


\section{Introduction}
\label{sec1}

The Lotka--Volterra model arised in the middle of 20's 
of the XX century, by the mathematician Vito Volterra (1860--1940), 
who intended to explain the oscillatory levels of certain fish catches 
in the Adriatic sea \cite{Murray}. At the same time, the biophysicist  
Alfred Lokta (1880--1949) studied the same interaction predator-prey 
and published the book  ``Elements of Physical Biology'' \cite{Veron}.
The Lokta model was similar to the Volterra one, and 
the predator-prey model is nowadays called 
the Lotka--Volterra model \cite{MR4394645,MyID:505}.
	
The Lotka--Volterra model is a nonlinear population model 
that has several applications, from epidemiology \cite{RA,Delfim}, 
to biology  \cite{Murray} or economics \cite{KAMI}, among others. 
Sumarti, Nurfitriyana and Nurwenda, in 2014, applied the continuous  
Lotka--Volterra model with several interaction laws to the banking 
system and studied the local stability of the equilibrium points \cite{S}.  
Here we consider a particular case of the Lotka--Volterra model where 
the growth behavior of one of the species  is not exponential but logistic.
Precisely, the model that we are going to study is the following one:
\begin{align}
\label{int}
\left\{ \begin{array}{ll} 
{^c}D^\sigma D(t)& = \alpha D(t) \left(1 - \dfrac{D(t)}{C}\right) - p D(t) L(t),\\
{^c}D^\sigma L(t) & = p D(t) L(t) - \beta L(t), \quad 0 < \sigma \leq 1,
\end{array}\right.  
\end{align}
where $D(t)$ represents the prey population in the instant $t$ 
and $L(t)$ represents the predator population in time $t$. 
The value $C$ represent the carrying capacity or the ``ideal''  
capacity of the prey, $\alpha$ represents the growth rate of the prey, 
and $\beta$ represents the decay of the predators in the absence of the prey. 
On the other hand, the parameter $p$ represents the interaction rate between the species.
In fractional calculus, several notions of differentiation are available, 
such as the Gr\"{u}nwald--Letnikov derivative \cite{Tese}, 
the Riemann--Liouville derivative \cite{Tese}, 
or the Caputo derivative \cite{CAP,bra}, among others. 
Here we consider fractional differentiation in the Caputo sense:
the model is expressed in terms of the Caputo fractional derivative ${^c}D^\sigma$.  
If $\sigma =1$, then we recover the standard derivative of order one:
${^c}D^1 D(t)=\dot{D}(t)$ \cite{MR3443073}.

For most nonlinear models, it is not possible to obtain the exact solution, 
so numerical schemes are important to obtain numerical approximations 
and to obtain a graphical representation of the solutions. There are several 
numerical methods, some of them standard, while others are nonstandard. 
For instance, Euler and Heun methods are standard methods of discretization
while Mickens' method is an example of a nonstandard scheme 
\cite{AN,Mick94,Mick02,Mick05,Mick07}. 

In Section~\ref{sec2}, we analyze the equilibrium points of \eqref{int}
and their local stability in the case $\sigma = 1$.
Afterwards, in Section~\ref{sec3}, we discretize the model using the  
Euler numerical scheme \cite{AN}, and in Section~\ref{sec4} the Mickens numerical scheme 
\cite{Mick94,Mick02,Mick05,Mick07,Mick13}, analyzing the dynamical consistency 
with the continuous model \eqref{int}. 	
We proceed with Section~\ref{sec5}, using fractional calculus
to study \eqref{int} for $0 < \sigma < 1$: 
we prove the existence and uniqueness of a positive solution
(Proposition~\ref{EUF}) and the conservation law (Theorem~\ref{thm:consLaw}); 
we determine the local stability of the equilibrium points of the system
(Section~\ref{sub:sec:SA}); and we represent the solutions graphically 
(Section~\ref{sub:sec:GA}). For the fractional derivative, 
it is necessary to use suitable numerical schemes. Here we use 
the modified trapezoidal method that involves the modified Euler scheme 
\cite{frac18,frac08}. We end our work representing graphically some of the solutions 
for the different methods, comparing them, and with Section~\ref{sec6}
conclusion. All our numerical simulations were done using 
\textsf{Mathematica} version 12.1.


\section{The Modified Lotka--Volterra Model}
\label{sec2}

In this section, we determine the feasible region
for model \eqref{int} with $\sigma = 1$, 
the equilibrium points, and their local stability. 


\subsection{Model description}

In the model we have two populations. The variable $D(t)$ 
represents the prey population that, in the absence of predation, 
grows in the logistic way. The predator population, $L(t)$, 
in case of absence of any prey for sustenance, dies with exponential decay. 
The model is reasonable since in the physical world the populations are not 
infinite, so using the logistic function makes sense instead of exponential growth:
\begin{equation}
\label{banc}
\left\{\begin{array}{ll}
\dot{D} & = \alpha D \left(1 - \dfrac{D}{C}\right) - p D L,\\ \\
\dot{L} & = p D L - \beta L.
\end{array} \right.
\end{equation}

Throughout the text, we consider two hypothesis:
\begin{description}
\item [$(P_1)$] $0 < \alpha < \beta < p C <1$;
\item [$(P_2)$] all the initial conditions are positive.
\end{description}

The first condition is necessary for the existence of 
non-negative equilibrium points. We use the parameter $C$, the carrying capacity, 
in our computations, but it will be replaced by $C=1$ when necessary. 
In that case $D$ and $L$ represent a fraction of $C$. 

In our model, if $\alpha = 0$, then
\begin{align*}
\left\{\begin{array}{ll} 
\dot{D} & = - pD L,\\ \\
\dot{L} & = pD L - \beta L,
\end{array} \right.
\end{align*}
and the  prey population tends to extinction, which implies that the predator population 
will also be extinct because they do not have what to eat. 

Assuming the prey is the only food resource for the predators, 
and the predators do not migrate looking for other place to live, 
if $p = 0$, then
\begin{equation*}
\left\{\begin{array}{ll} 
\dot{D} &= \alpha D \left(1 - \dfrac{D}{C}\right),\\ 
\dot{L} &= - \beta L,
\end{array} \right.
\end{equation*}
and there is no interaction between the two species. This means that the 
predator population will be extinct and the prey will approach 
the carrying capacity $C$.


\subsection{Non-negativity of solutions and conservation law}

Model \eqref{banc} is a populational model so it has to satisfy some properties, 
namely the non-negativity of the solutions and the conservation law. 

Roughly speaking, Proposition~\ref{prop:01} asserts that the prey 
and predator populations are always non-negative.

\begin{proposition}
\label{prop:01}
Under hypotheses $(P_1)$ and $(P_2)$,
the solutions of system \eqref{banc}  
are non-negative for all $t > 0$, that is, 
the solutions are in	
\begin{equation*}
\Omega_+ = \left\{(D,L) \in (\mathbb{R}_{0}^{+})^2 
: D \geqslant 0, L \geqslant 0\right\}.
\end{equation*}
\end{proposition}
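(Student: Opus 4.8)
The plan is to exploit the fact that in system~\eqref{banc} each variable occurs as a factor in its own equation, so that the coordinate axes are invariant and no trajectory starting in the open first quadrant can ever reach them. Concretely, I would first rewrite the right-hand sides in factored form
\begin{align*}
\dot{D} &= D \left[ \alpha \left(1 - \frac{D}{C}\right) - p L \right], \\
\dot{L} &= L \left[ p D - \beta \right].
\end{align*}
Both bracketed terms are continuous along any solution, since the right-hand side of~\eqref{banc} is polynomial, hence locally Lipschitz; by the Picard--Lindel\"{o}f theorem a unique solution $(D(t),L(t))$ exists on some maximal interval $[0,T)$.

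Next I would integrate each scalar equation, treating its bracketed factor as a known function of $t$. Setting
\begin{equation*}
g_1(t) = \alpha \left(1 - \frac{D(t)}{C}\right) - p L(t),
\qquad
g_2(t) = p D(t) - \beta,
\end{equation*}
the equations become linear in $D$ and $L$ respectively, and integration gives the exponential representation
\begin{equation*}
D(t) = D(0)\, \exp\!\left( \int_0^t g_1(s)\, ds \right),
\qquad
L(t) = L(0)\, \exp\!\left( \int_0^t g_2(s)\, ds \right).
\end{equation*}
Because the exponential factors are strictly positive and, by hypothesis~$(P_2)$, $D(0) > 0$ and $L(0) > 0$, it follows that $D(t) > 0$ and $L(t) > 0$ for every $t \in [0,T)$; in particular the solution stays in $\Omega_+$.

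The step requiring the most care is the passage from a local to a global conclusion: the exponential formula only certifies positivity on the maximal interval of existence, so one must still argue that $T = +\infty$. This should follow once boundedness of the orbits is secured, the carrying-capacity term $-\alpha D^2/C$ preventing $D$ from blowing up and the relation $\dot{L} = L(pD-\beta)$ then keeping $L$ bounded, thereby ruling out finite-time escape. Note that hypothesis~$(P_1)$ is not needed for non-negativity itself; it enters only later, for the existence of non-negative equilibria. A slightly shorter alternative would be to observe directly that $\{D=0\}$ and $\{L=0\}$ are invariant sets of~\eqref{banc}, since each derivative vanishes there, so uniqueness of solutions forbids any interior trajectory from crossing the axes.
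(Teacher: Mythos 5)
Your proof takes essentially the same route as the paper's: factor each equation of \eqref{banc}, treat the bracketed term as a known function of $t$, and read off non-negativity from the exponential representation $D(t)=D(0)\exp\bigl(\int_0^t g_1(s)\,ds\bigr)$, $L(t)=L(0)\exp\bigl(\int_0^t g_2(s)\,ds\bigr)$. If anything, your version is more careful than the paper's, which misstates the integration constant as $D_1=e^{D_0}$ instead of $D_0$ and passes silently over the maximal-interval/global-existence point you flag; your remark that $(P_1)$ plays no role in non-negativity is likewise consistent with the paper's argument, which never invokes it.
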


\begin{proof}
\smartqed	
Considering the first equation of  \eqref{banc},
\begin{equation*}
\dfrac{dD}{dt} = \alpha D \left(1 - \dfrac{D}{C}\right) - pDL 
\Leftrightarrow 
\dfrac{dD}{D} = \left[\alpha \left(1 - \dfrac{D}{C}\right) - pL \right] dt,
\end{equation*}
and integrating both sides we get
\begin{equation*}
D(T) = D_1 \exp^{\displaystyle \int_{0}^{T} \alpha 
\left(1 - \frac{D}{C}\right) - pL \, dt} 
\geqslant 0, \quad D_1 = e^{D_0}.
\end{equation*}
Analogously, considering the second equation of \eqref{banc},
\begin{equation*}
\dfrac{dL}{dt} = L(pD - \beta),
\end{equation*}
and we have
\begin{equation*}
L(T) = L_1 \exp^{\displaystyle \int_{0}^{T} pD - \beta \, dt} 
\geqslant 0, 
\quad L_1 = e^{L_0}.
\end{equation*}
Therefore, $(D,T) \in \Omega_{+}$.
\qed
\end{proof}

\begin{theorem}
Let $M = \max\{D(0),C\}$. If hypotheses $(P_1)$ and $(P_2)$ hold,
then the feasible region is given by
$$ 
\Omega = \left\{(D,L) \in (\mathbb{R}_{0}^{+})^2 : 
0 \leq D \leq M \text{ and } 0\leq D + L 
\leq \left(\dfrac{\alpha + 4 \beta}{4 \beta}\right)M \right\},
$$
that is, any solution that starts in $\Omega$ remains in  $\Omega$.
\end{theorem}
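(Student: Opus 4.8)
My plan is to combine the non-negativity already guaranteed by Proposition~\ref{prop:01} (which gives $D(t)\ge 0$ and $L(t)\ge 0$, hence $D+L\ge 0$ for all $t$) with two a priori upper bounds: first $D(t)\le M$, and then $D(t)+L(t)\le\frac{\alpha+4\beta}{4\beta}M$. Fixing an initial condition fixes $M=\max\{D(0),C\}$, so showing that each of these upper bounds is preserved in forward time establishes that $\Omega$ is positively invariant.

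For the bound on $D$, I would exploit the logistic structure. Since $pDL\ge 0$ by non-negativity, the first equation of \eqref{banc} yields the differential inequality
\[
\dot D \le \alpha D\left(1-\frac{D}{C}\right).
\]
By the comparison principle, $D(t)\le \phi(t)$, where $\phi$ solves the pure logistic equation $\dot\phi=\alpha\phi(1-\phi/C)$ with $\phi(0)=D(0)$. The logistic solution is monotone toward the carrying capacity $C$ (increasing if $D(0)<C$, decreasing if $D(0)>C$), so $\phi(t)\le\max\{D(0),C\}=M$ for all $t\ge 0$, whence $D(t)\le M$. (Equivalently, one can check the Nagumo tangency condition directly: at $D=M$ one has $\dot D\le 0$ in both cases $M=C$ and $M=D(0)>C$, so the field never pushes $D$ past $M$.)

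For the bound on the total population I would work with $D+L$. Adding the two equations makes the interaction term $pDL$ cancel, giving
\[
\frac{d}{dt}(D+L)=\alpha D\left(1-\frac{D}{C}\right)-\beta L.
\]
Adding $\beta(D+L)$ to both sides to create a linear damping term yields
\[
\frac{d}{dt}(D+L)+\beta(D+L)=\alpha D\left(1-\frac{D}{C}\right)+\beta D.
\]
Here I would bound the right-hand side using two facts: the logistic expression is maximized at $D=C/2$ with value $\alpha C/4$, and $\beta D\le\beta M$ by the first bound; since $C\le M$, this gives
\[
\frac{d}{dt}(D+L)+\beta(D+L)\le\frac{\alpha C}{4}+\beta M\le\frac{\alpha+4\beta}{4}\,M.
\]
Multiplying by the integrating factor $e^{\beta t}$ and integrating produces
\[
(D+L)(t)\le (D+L)(0)\,e^{-\beta t}+\frac{\alpha+4\beta}{4\beta}\,M\left(1-e^{-\beta t}\right),
\]
so if $(D+L)(0)\le\frac{\alpha+4\beta}{4\beta}M$ then $(D+L)(t)\le\frac{\alpha+4\beta}{4\beta}M$ for all $t\ge 0$, which is exactly the required invariance. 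The main obstacle I anticipate is purely algebraic and conceptual rather than technical: recognizing that adding the equations eliminates $pDL$, that the constant $\alpha C/4$ is the maximum of the logistic growth term, and that replacing $C$ by $M$ (licensed by $C\le M$) together with $\beta D\le\beta M$ is precisely what produces the coefficient $\frac{\alpha+4\beta}{4\beta}$; once the correct differential inequality is set up, the Grönwall-type estimate finishes the argument routinely.
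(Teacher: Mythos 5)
Your proposal is correct and follows essentially the same route as the paper: bounding $\dot D \le \alpha D\left(1-\frac{D}{C}\right)$ to get $D\le M$, then adding the equations so that $pDL$ cancels, inserting $\beta D$ to create the damping term $-\beta W$, maximizing the logistic expression at $\frac{\alpha C}{4}$, and closing with the linear Gr\"{o}nwall-type estimate. The only cosmetic differences are that you invoke the comparison principle and monotonicity of the logistic flow (plus a Nagumo tangency check) where the paper integrates the logistic inequality explicitly, and your final step states the forward-invariance conclusion more directly than the paper, whose proof only exhibits the solution formula and its limit as $t\to+\infty$.
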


\begin{proof}
\smartqed
Considering the first equation of \eqref{banc},
\begin{equation*}
\dfrac{dD}{dt} = \alpha D \left(1 - \dfrac{D}{C}\right) - pDL 
\Rightarrow
\dfrac{dD}{dt}  \leq \alpha D \left(1 - \dfrac{D}{C}\right) 
\Leftrightarrow 
\dfrac{dD}{D \left(1- \frac{D}{C}\right)}  \leq \alpha dt.
\end{equation*}
Integrating both sides,
\begin{equation*}
\int \dfrac{1}{D} \, dD 
- \int \dfrac{-\frac{1}{C}}{1 - \frac{D}{C}} \, dD  
\leq \alpha t + k \Leftrightarrow
D(t) \leq \dfrac{C_1\exp^{\alpha t}}{1 + \frac{C_1}{C} \exp^{\alpha t}}.
\end{equation*}
If $D(0) = D_0$, then $C_1 = \dfrac{D_0}{1 - \frac{D_0}{C}}$. Thus,
\begin{equation*}
D(t) \leqslant \dfrac{D_0 \exp^{\alpha t}}{1 
- \frac{D_0}{C} (1 - \exp^{\alpha t})},
\end{equation*}
that is, $\underset{t \to \infty} {\lim}D(t) = C$ 
and $\underset{t \to 0}{\lim}D(t) = D_0$. Let $W(t) = D(t) + L(t)$. 
Then,
\begin{align*}
\dfrac{dW}{dt} 
&= \dfrac{dD}{dt} + \dfrac{dL}{dt}\\ 
&= \alpha D \left(1 - \dfrac{D}{C}\right) + \beta D - \beta D - \beta L\\ 
& = \alpha D \left(1 - \dfrac{D}{C}\right) + \beta D - \beta W.
\end{align*}
If $f(D) = \alpha D \left( 1 - \frac{D}{C}\right)$, 
then $\overline{D} = \frac{C}{2}$, which is a critical point that is the maximum. 
Therefore, $f(\overline{D}) = \frac{\alpha C}{4}$ and
\begin{align*}
\dfrac{dW}{dt} &\leq \dfrac{\alpha C}{4} + \beta D - \beta W, \\ 
\dfrac{dW}{dt} &\leq \left(\dfrac{\alpha + 4 \beta}{4} \right) M - \beta W, \\ 
\dfrac{dW}{dt} + \beta W &\leq \left(\dfrac{\alpha + 4 \beta}{4} \right) M.
\end{align*}
Solving the differential equation,
\begin{equation*}
W(t) \leq W_0 \exp^{-\beta t} 
+ \left(\dfrac{\alpha + 4 \beta}{4 \beta}\right) 
M (1 - \exp^{-\beta t})
\end{equation*}
and we have 
$\underset{t \to + \infty}{\lim} W(t) 
= \left(\dfrac{\alpha + 4 \beta}{4 \beta}\right) M$.
\qed
\end{proof}


\subsection{Stability analysis}

The equilibrium points of \eqref{banc} are
$$
\begin{cases}
\dot{D} & = 0,\\  
\dot{L} & = 0,\\
\end{cases} 
\Leftrightarrow 
\begin{cases}
\overline{D} & = 0,\\ 
\overline{L} & = 0,\\
\end{cases}
\vee 
\begin{cases}
\overline{D} & = C, \\ 
\overline{L} & = 0,\\
\end{cases} 
\vee 
\begin{cases}
\overline{D} & = \dfrac{\beta}{p}, \\ \\
\overline{L} & = \dfrac{\alpha}{p} \left(1 - \dfrac{\beta}{pC}\right).
\end{cases}
$$ 
We obtain three equilibrium points:
$$
e_1 =(0,0), 
\quad e_2 = (C,0), 
\quad e_3 = \left(\dfrac{\beta}{p}, 
\dfrac{\alpha}{p}\left(1 - \dfrac{\beta}{pC}\right)\right).
$$
The equilibrium $e_1$ means both populations tend to extinction;  
and $e_2$ means the prey population tend to $C$ and the predators population  
tend to extinction. It is important to note that the equilibrium point
$e_3$ only exists if  $1 - \frac{\beta}{pC} > 0$, which is one of the relations 
in $(P_{1})$, meaning that the prey population tends 
to $\frac{\alpha}{\beta}$ and the predator population tends 
to $\frac{\alpha}{p}\left(1 - \frac{\beta}{pC}\right)$.

To study the local stability, we linearize the system. 
The Jacobian matrix is 
$$ 
J = \left[ 
{\begin{array}{cc}
\alpha \left(1 - 2\dfrac{D}{C}\right) - pL  & -pD \\
pL & pD - \beta 
\end{array}} 
\right].
$$
In $e_1 =(0,0)$,
$$ 
J(e_1) = \left[ 
{\begin{array}{cc}
\alpha  & 0 \\
0 & - \beta 
\end{array}} 
\right]
$$
and so
\begin{equation*}
P(\lambda) = \det (J(e_1) - \lambda Id) 
= (\alpha - \lambda)(-\beta - \lambda). 
\end{equation*}
The eigenvalues are $\lambda_1 = \alpha$ 
and $\lambda_2 = - \beta$. By hypothesis $(P_1)$,  $\lambda_1 > 0$ 
and $\lambda_2 < 0$. This means $e_1$ is a \emph{saddle point}. 

In $e_2 = (C,0)$,
$$ 
J(e_2) 
= 
\left[ 
{\begin{array}{cc}
- \alpha  & -pC \\
0 & pC - \beta \\
\end{array}} 
\right]
$$
and
\begin{equation*}
P(\lambda) = \det (J(e_2) - \lambda Id) 
= (-\alpha - \lambda)(pC -\beta - \lambda).
\end{equation*}
The  eigenvalues are $ \lambda_1 = - \alpha$ 
and $\lambda_2 = - \beta + pC$. From $(P_1)$ it can be seen 
that $\lambda_1 < 0$ and $\lambda_2 > 0$. Once again, 
$e_{2}$ is a  \emph{saddle point}. 

For $e_3 = \left(\dfrac{\beta}{p}, 
\dfrac{\alpha}{p}\left(1 - \dfrac{\beta}{pC}\right)\right)$,
$$ 
J(e_3) 
= \left[ 
{\begin{array}{cc}
-\dfrac{\alpha \beta}{pC}  & -\beta \\
\alpha\left(1 - \dfrac{\beta}{pC}\right) & 0 
\end{array}} 
\right]
$$
and
\begin{equation*}
P(\lambda) = \det (J(e_3) - \lambda Id) 
= \lambda^2 + \dfrac{\alpha \beta}{pC} \lambda 
+ \beta \alpha \left(1 - \dfrac{\beta}{pC}\right).
\end{equation*}
By the  Routh--Hurwitz criterion and hypothesis $(P_1)$, all coefficients 
are greater than zero and we conclude that all the roots are negative. 
Therefore, $e_3$ is a \emph{sink} or \emph{asymptotically stable}. 


\subsection{Graphical analysis}

Throughout the work, we represent graphically the solutions, 
under different initial conditions, and compare the different methods, 
visualizing the similarities and differences. In our simulations,
for comparison reasons, the parameters will always be 
$C=1$, $a=0.05$, $b=0.3$, and $p=0.4$.

In Figure~\ref{fig2}, the continuous model \eqref{int} 
is represented using different initial conditions.
\begin{figure}[ht!]
\centering
\includegraphics{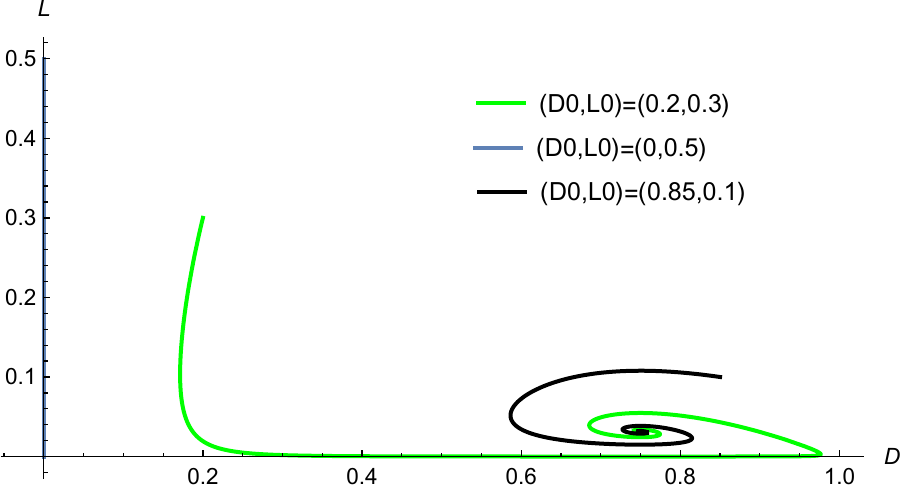}
\caption{Continuous model \eqref{int}
with different initial conditions.}\label{fig2}
\end{figure}
We see that if $(D_0, L_{0}) = (0.2, 0.3)$, then the predators 
tend to extinction and the prey population increase. 
When the prey tend to the carrying capacity, then the predators 
population increase, converging to $e_{3}$. 
On the other hand, if $(D_0,L_{0}) = (0, 0.5)$, and since the prey are extinct, 
the predators tend to extinction because of absence of food. Finally,
if $(D_0,L_{0}) = (0.85,0.1)$, then the population of prey and predators 
tend to  $e_3$.  

Figure~\ref{fig9} corresponds to initial conditions  
$(D_0, L_{0}) =(0.2, 0.3)$ and will be used 
for comparison with the other models in the sequel. 
\begin{figure}[ht!]
\centering
\includegraphics[width=9 cm]{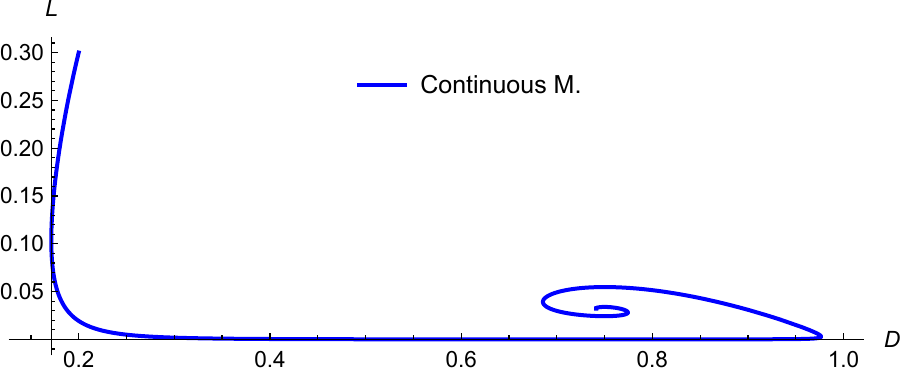}
\caption{Continuous model \eqref{int}
with $(D_0,L_{0}) = (0.2,0.3)$.}\label{fig9}
\end{figure} 


\section{Euler's Numerical Scheme}
\label{sec3}

Leonhard Euler (1707--1783) presented his method, 
now known as the Euler method, in \textit{Institutionum calculi integralis} (1768-1770). 
Euler's method is an iterative process for approximating the solution of a problem. 
In other words, given an ordinary first-order differential equation 
or system, with known initial condition, Euler's method give us a way to determine 
an approximated numerical solution. Let us consider $y'=f(t,y)$, $y(t_0)=y_0$, 
and $t \in [a,b]$, and let us compute 
the Taylor series for $t = t_1 = t_0 + h$:
\begin{equation*}
y(t_1) \approx y_1 = y_0 + hf(t_0,y_0) + \dfrac{h^2}{2} f'(t_0,y_0) 
+ \ldots + \dfrac{h^k}{k!} f^{(k-1)} (t_0,y_0), 
\end{equation*}
where $f^{(j)}(t_0,y_0) = \dfrac{d^j f}{dt^j}(t_0,y_0)$.
The following explicit method with uniform step-size $h$
allow us to obtain approximated solutions $y_i \approx y(t_i)$:
\begin{equation} 
\label{euler}
\begin{split}
y_0 &= y(t_0),\\
y(t_{i+1}) &\approx y_{i+1} = y_i + h f(t_i,y_i), 
\quad i=0, \ldots, n.
\end{split}
\end{equation}
The scheme \eqref{euler} is precisely the Euler method. 
The step-size $h = t_{i+1} - t_i$, $i = 0, \ldots, n$, is uniform 
and $t_i = t_0 + ih$, $i = 0, \ldots, n$, where $h = \frac{b-a}{n}$. 
For physical models, this method is not frequently used because 
it may be difficult to prove the non-negativity of the solutions. 
Moreover, it is also known to cause numerical instabilities 
and present dynamical inconsistency with the continuous model
\cite{Elaydi01,Mick02,Shi}.


\subsection{Model discretization}

Using the concepts just introduced,
$$
\begin{cases}
\dot{D}(t) \approx \dfrac{D(t+h) - D(t)}{h},\\ 
\dot{L}(t) \approx \dfrac{L(t+h)-L(t)}{h},
\end{cases} 
$$
and we get
\begin{align}
\label{EE}
& \begin{cases} 
\dfrac{D_{i+1} - D_i}{h} 
= \alpha D_i \left(1 - \dfrac{D_i}{C}\right) - pD_iL_i,\\ 
\dfrac{L_{i+1} - L_i}{h} 
= pD_iL_i - \beta L_i,
\end{cases} \nonumber \\ 
\Leftrightarrow
&\begin{cases} 
D_{i+1} = D_i \left(\alpha h 
\left(1 - \dfrac{D_i}{C}\right) - phL_i + 1 \right), \\ 
L_{i+1} = L_i \left(phD_i - \beta h + 1\right),  
\end{cases}
\end{align}
$i \geq 0$. System \eqref{EE} is the explicit discrete model 
by the standard Euler's scheme.


\subsection{Non-negativity of solutions and conservation law}

Now we prove that the discrete system \eqref{EE} satisfies 
the non-negativity condition and the conservation law. 
First we prove an auxiliary lemma under the following hypothesis:
\begin{description}
\item[$(H_1)$] $1 - \beta h > 0$.
\end{description}

\begin{lemma}
Let $h$ be the step-size and suppose 
that $(P_{1})$ and $(H_{1})$ hold. 
Then, 
$$
\dfrac{1+\alpha h}{p h} \geq C.
$$
\end{lemma}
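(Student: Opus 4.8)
The plan is to reduce the stated inequality to a polynomial one and then control the resulting expression using the two hypotheses. Since $p > 0$ and $h > 0$, multiplying $\frac{1 + \alpha h}{ph} \geq C$ through by the positive quantity $ph$ yields the equivalent statement $1 + \alpha h \geq pCh$, which in turn rearranges to $(pC - \alpha)h \leq 1$. By $(P_1)$ we have $\alpha < \beta < pC$, so in particular $pC - \alpha > 0$ and the factor multiplying $h$ is a genuine positive constant; the whole lemma is therefore equivalent to the single bound $h \leq \frac{1}{pC - \alpha}$. Equivalently, one can observe that $h \mapsto \frac{1}{ph} + \frac{\alpha}{p}$ is decreasing, so the inequality holds precisely for $h$ below the threshold $\frac{1}{pC-\alpha}$.

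First I would record what $(H_1)$ buys us. From $1 - \beta h > 0$ and $\beta > 0$ we get $h < \frac{1}{\beta}$. Feeding this into the product to be estimated gives $(pC - \alpha)h < \frac{pC - \alpha}{\beta}$, so it suffices to prove the $h$-free inequality $\frac{pC - \alpha}{\beta} \leq 1$, i.e.\ $pC \leq \alpha + \beta$. This is the heart of the matter: once $pC - \alpha \leq \beta$ is in hand, the chain $(pC - \alpha)h < \frac{pC-\alpha}{\beta} \leq 1$ closes the argument, and reversing the opening reduction returns $\frac{1 + \alpha h}{ph} \geq C$.

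The step I expect to be the main obstacle is exactly this comparison between $pC - \alpha$ and $\beta$. The bare ordering in $(P_1)$, namely $\alpha < \beta < pC < 1$, presents $pC - \alpha$ and $\beta$ as two positive numbers but does not by itself force $pC - \alpha \leq \beta$, so the essential work is to extract $pC \leq \alpha + \beta$ from the standing data. I would try to use the full strength of $(P_1)$ — in particular the ceiling $pC < 1$ together with $\alpha$ lying strictly below the other parameters — to squeeze $pC$ beneath $\alpha + \beta$. Should the four inequalities of $(P_1)$ prove insufficient on their own, the natural fallback is to retain the smallness of $h$ rather than discarding it at $h < 1/\beta$, and instead bound $(pC - \alpha)h$ against $1$ directly using whatever explicit ceiling on the step-size the discretization actually supplies.

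In the final write-up I would present this as a two-line computation: state the equivalence $\frac{1+\alpha h}{ph} \geq C \Longleftrightarrow (pC-\alpha)h \leq 1$, invoke $(H_1)$ to replace $h$ by its bound $\frac{1}{\beta}$, and then cite the constant inequality $pC - \alpha \leq \beta$ as the one nontrivial input drawn from $(P_1)$. Everything else is monotonicity and sign bookkeeping; the only genuinely delicate point to verify is that this last constant inequality does follow from the hypotheses, and that is where I would concentrate the rigor.
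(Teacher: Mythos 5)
Your reduction is correct, and your suspicion about the key step is exactly right. Since $ph>0$, the lemma is indeed equivalent to $(pC-\alpha)h\leq 1$, and $(H_1)$ supplies only $h<\frac{1}{\beta}$, so everything hinges on the $h$-free comparison $pC-\alpha\leq\beta$. That comparison does \emph{not} follow from $(P_1)$, and no manipulation of the four inequalities $0<\alpha<\beta<pC<1$ will produce it: take $\alpha=0.01$, $\beta=0.02$, $p=0.9$, $C=1$ and $h=10$; then $(P_1)$ holds, $(H_1)$ holds because $1-\beta h=0.8>0$, and yet $\frac{1+\alpha h}{ph}=\frac{1.1}{9}\approx 0.12<1=C$. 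So the statement itself is false under the stated hypotheses, and your plan cannot be completed as written --- not because the strategy is misguided, but because the lemma needs an extra assumption, namely either $pC\leq\alpha+\beta$ or the direct step-size ceiling $h\leq\frac{1}{pC-\alpha}$ that you correctly identify as the natural fallback. (That ceiling is in fact harmless for the rest of the paper: since $\alpha<\beta$, one has $\frac{1}{pC-\alpha}<\frac{1}{pC-\beta}$, so it is even stronger than the restriction $h<\frac{1}{pC-\beta}$ imposed later for the stability of $e_3$.)

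You should also know that the paper's own proof does not close this gap. It reads: ``From $(P_1)$ and $(H_1)$, $0>1+\alpha h-\beta h>1+\alpha h-Cph$ because $\beta h<Cph$.'' The first inequality flatly contradicts $(H_1)$, which forces $1+\alpha h-\beta h>\alpha h>0$; and even with the sign repaired, the observation $\beta h<Cph$ only bounds $1+\alpha h-pCh$ \emph{from above} by the positive quantity $1+\alpha h-\beta h$, which says nothing about its sign --- indeed, read literally the displayed chain would establish the reverse inequality $\frac{1+\alpha h}{ph}<C$. So the one delicate point on which you proposed to concentrate the rigor is precisely where the published argument fails as well; your write-up, which isolates $pC-\alpha\leq\beta$ as an unproved input rather than asserting it, is the more honest account, but to turn it into a proof you must add the missing hypothesis explicitly.
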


\begin{proof}
\smartqed	
From hypotheses $(P_{1})$ and $(H_{1})$, 
$0>1 + \alpha h - \beta h > 1+ \alpha h - C p h$
because $\beta h <  C p h$.
\qed
\end{proof}

\begin{theorem}
If hypotheses $(P_1)$, $(P_2)$ and $(H_1)$ hold,
then all solutions of system \eqref{EE} are non-negative  
for all $n \geq 0$ with the feasible region being given by
$$
\Omega = \left\{(D,L) \in (\mathbb{R}_{0}^{+})^2 
: W_n = D_n + L_n \leq C\right\}.
$$
\end{theorem}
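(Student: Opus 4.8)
The plan is to run a single simultaneous induction on $n$, carrying the three properties $D_n \geq 0$, $L_n \geq 0$, and $W_n = D_n + L_n \leq C$ together, since the non-negativity and the invariance of $\Omega$ reinforce one another. The base case $n = 0$ is immediate: hypothesis $(P_2)$ gives $D_0, L_0 \geq 0$, while membership of the initial data in $\Omega$ gives $W_0 \leq C$. For the inductive step I would assume all three properties hold at step $n$ and establish them at step $n+1$ using the explicit recurrences of \eqref{EE}.

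First I would dispatch $L_{n+1} \geq 0$, which is the easy part. Writing $L_{n+1} = L_n(phD_n - \beta h + 1)$ and observing that $phD_n - \beta h + 1 = phD_n + (1 - \beta h) \geq 1 - \beta h > 0$ by $(H_1)$ together with $D_n \geq 0$, the multiplying factor is positive, so $L_{n+1} \geq 0$ follows from $L_n \geq 0$. Next I would establish the conservation estimate $W_{n+1} \leq C$. The key algebraic observation is that the interaction terms $\pm\, phD_nL_n$ cancel when the two recurrences are added, giving $W_{n+1} = W_n + \alpha h D_n(1 - D_n/C) - \beta h L_n$. Substituting $L_n = W_n - D_n$ and regrouping yields $W_{n+1} = (1 - \beta h)W_n + h D_n[\alpha(1 - D_n/C) + \beta]$. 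Since $1 - \beta h > 0$ and $W_n \leq C$, the first term is at most $(1 - \beta h)C$, so it suffices to show $g(D_n) := D_n[\alpha(1 - D_n/C) + \beta] \leq \beta C$ on $[0, C]$. A short calculus check shows that, because $\alpha < \beta$ by $(P_1)$, the critical point of $g$ lies beyond $C$; hence $g$ is increasing on $[0, C]$ and attains its maximum $g(C) = \beta C$ at the endpoint, giving exactly $W_{n+1} \leq (1 - \beta h)C + h\beta C = C$.

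The delicate step is $D_{n+1} \geq 0$, and I expect this to be the main obstacle. Writing the factor as $1 + \alpha h - (\alpha h/C)D_n - phL_n$, the naive bound $L_n \leq C$ is too weak: it only yields $1 - phC$, which need not be non-negative since $pC$ is merely less than $1$ and $h$ can be comparatively large. Instead I would exploit the sharper inequality $L_n \leq C - D_n$ coming directly from $W_n \leq C$. This recasts the factor as $(1 + \alpha h - phC) + hD_n(p - \alpha/C)$. The first bracket is non-negative by the preceding Lemma, which asserts $1 + \alpha h \geq phC$, and the second is non-negative because $p - \alpha/C > 0$ follows from $\alpha < pC$ in $(P_1)$, with $D_n \geq 0$ and $h > 0$. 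Therefore the factor multiplying $D_n$ is non-negative and $D_{n+1} \geq 0$.

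Collecting the three conclusions closes the induction: every iterate satisfies $D_n \geq 0$, $L_n \geq 0$, and $W_n \leq C$, so the solutions are non-negative for all $n \geq 0$ and any trajectory starting in $\Omega$ remains in $\Omega$. The crucial interplay to highlight is that the feasibility constraint $W_n \leq C$ is not merely a consequence but an active ingredient in proving $D_{n+1} \geq 0$, which is why the two properties must be propagated jointly rather than separately.
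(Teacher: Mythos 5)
Your proof is correct, and while it shares the paper's key ingredients it organizes them differently enough to be worth comparing. The part you flag as delicate is actually where you and the paper coincide: for $D_{n+1}\geq 0$ the paper bounds the multiplying factor below by $1+\alpha h - phW_n$ (using $\alpha/C<p$ from $(P_1)$) and then invokes $W_n\leq C$ together with the preceding Lemma's inequality $1+\alpha h\geq phC$; your substitution $L_n\leq C-D_n$ produces the same estimate, merely rewritten with an extra non-negative remainder $hD_n(p-\alpha/C)$, and it rests on exactly the same Lemma, which both you and the paper are citing rather than reproving. Where you genuinely diverge is the conservation-law step and the logical architecture. The paper relaxes $\alpha$ to $\beta$ (via $\alpha<\beta$), maximizes $\beta h D\left(2-\frac{D}{C}\right)$ without constraint at $D=C$, and iterates the recursion $W_{n+1}\leq(1-\beta h)W_n+\beta h C$ to a closed form, which additionally yields $\lim_{n\to\infty}W_n=C$; you instead keep the exact identity $W_{n+1}=(1-\beta h)W_n+hD_n\left[\alpha\left(1-\frac{D_n}{C}\right)+\beta\right]$, maximize $g$ on $[0,C]$ --- with the constraint $D_n\leq C$ supplied by the induction hypothesis and monotonicity by $\beta>\alpha$ --- and close the bound $W_{n+1}\leq C$ in a single step, at the cost of losing the asymptotic information, which the statement does not require. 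Your explicit simultaneous induction is also a genuine structural improvement over the paper's sequential presentation, which implicitly needs the same joint bootstrap: the paper's $\alpha\mapsto\beta$ relaxation is only valid when $0\leq D_n\leq C$, its $L$-positivity step needs $D_n\geq 0$, its assertion that $D_{n+1}\geq 0$ holds \emph{if and only if} $W_n\leq\frac{1+\alpha h}{ph}$ is really just a sufficient condition obtained from a lower bound, and its uniform bound $W_n\leq C$ follows from the iterated inequality only when $W_0\leq C$ --- an assumption you make explicit in your base case, consistent with reading $\Omega$ as a forward-invariant feasible region.
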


\begin{proof}
\smartqed	
By hypotheses $(P_{1})$ and $(H_1)$, we easily see 
that the second equation of \eqref{EE},
\begin{equation*}
L_{n+1} = L_n\left(1 -  \beta h + phD_n\right), 
\end{equation*}
satisfy $L_{n+1} \geqslant 0$ for all $n$. 
Regarding the first equation, by $(P_1)$ it follows that
\begin{align*}
D_{n+1} 
&= D_n \left(1 +\alpha h - h 
\left(\dfrac{\alpha}{C}D_n+pL_n\right)\right)\\
& > D_n(1 + \alpha h - ph(D_n+L_n))\\
&> D_n(1 + \alpha h - phW_n).
\end{align*}
Therefore, $D_{n+1} \geq 0$ if, and only if, 
$1 + \alpha h - ph W_n  \geq 0$, that is, 
$W_n \leq \frac{1 + \alpha h}{ph}$.
Let $W_n  = D_n + L_n$. By \eqref{EE}, we have 
\begin{align*}
& \dfrac{W_{n+1} - W_n}{h} 
= \alpha D_n \left(1 - \dfrac{D_n}{C}\right) - \beta L_n\\
\Leftrightarrow	& \dfrac{W_{n+1} - W_n}{h} 
= \alpha D_n \left(1 - \dfrac{D_n}{C}\right) 
- \beta L_n - \beta D_n + \beta D_n\\
\Leftrightarrow & \dfrac{W_{n+1} - W_n}{h} 
= \alpha D_n \left(1 - \dfrac{D_n}{C}\right) 
- \beta W_n + \beta D_n
\end{align*}
and from $(P_1)$ it follows that
\begin{align*}
W_{n+1} 
&\leq (1 - \beta h)W_n + \beta h D_n 
\left(1 - \dfrac{D_n}{C}\right) + \beta h D_n\\
&= (1 - \beta h)W_n + \beta h D_n \left(2 - \dfrac{D_n}{C}\right).
\end{align*}
Let $f(D) = \beta h D \left(2 - \frac{D}{C}\right)$. 
Then the critical point is $\overline{D} = C$, which is a maximum, 
and $f(\overline{D}) = \beta h C$. Moreover, 
$W_{n+1} \leq (1 - \beta h) W_n + \beta h C$, that is,
\begin{equation*}
W_{n+1} \leq (1 - \beta h)^{n+1} W_0 
+ C \left(1 - (1- \beta h)^n\right).
\end{equation*}
Thus, $\underset{n \to + \infty}{\lim} W_{n+1} = C$, 
which means that $W_n \leq C \leq \dfrac{1+\alpha h}{p h}$ 
for all $n \geqslant 0$. We conclude that the feasible region is
\begin{equation*}
\Omega = \left\{(D,L) \in {\mathbb{R}_0^{+}}^2 
: 0 \leq W_n = D_n + L_n \leq C\right\}
\end{equation*} 
and that \eqref{EE} satisfies the non-negativity condition 
and the conservation law if $(H_1)$ is verified. 
\qed
\end{proof}


\subsection{Stability analysis}	

The stationary points of the Euler discrete system \eqref{EE} 
are obtained from
$$
\begin{cases}
F(D^*) = D^*,\\ 
F(L^*) = L^*,\\
\end{cases} 
\Leftrightarrow 
\begin{cases}
D^* = 0,\\ 
L^* = 0,\\
\end{cases}
\vee 
\begin{cases}
D^* = C, \\ 
L^* = 0,\\
\end{cases}
\vee 
\begin{cases}
D^* = \dfrac{\beta}{p},\\
L^* = \dfrac{\alpha}{p} 
\left(1 - \dfrac{\beta}{pC}\right),
\end{cases} 
$$
that is,
$$
e_1 =(0,0), 
\quad e_2 = (C,0), 
\quad e_3 = \left(\dfrac{\beta}{p}, 
\dfrac{\alpha}{p}\left(1 - \dfrac{\beta}{pC}\right)\right),
$$
which are equal to the ones of the continuous model \eqref{banc}.

To study the local stability, we linearize the system. 
The Jacobian matrix is given by
$$ 
J = \left[ 
{\begin{array}{cc}
\alpha h \left(1 - \dfrac{2D}{C}\right) - phL + 1  & -phD \\
phL & phD - \beta h + 1 \\
\end{array} } \right].
$$
For $e_1=(0,0)$, the Jacobian matrix is
$$ 
J(e_1) 
= 
\left[ 
{\begin{array}{cc}
1 + \alpha h  & 0 \\
0 & 1 - \beta h  \\
\end{array} } 
\right]
$$
and
\begin{equation*}
P(\lambda) = \det (J(e_1) - \lambda Id) 
= (1 + \alpha h - \lambda)(1 -\beta h - \lambda). 
\end{equation*}
The eigenvalues are $ \lambda_1 = 1 + \alpha h$ and $\lambda_2 = 1 - \beta h$. 
By $(P_1)$, $\lambda_1 > 1$, and by $(H_1)$, $ \lambda_2 < 1$. 
We conclude that $e_1$ is a \emph{saddle point}. 

For $e_2 = (C,0)$, the Jacobian matrix is
$$ 
J(e_2) 
= 
\left[ 
{\begin{array}{cc}
1 - \alpha h  & -phC \\
0 & phC - \beta h + 1 \\
\end{array} } \right]
$$
and
\begin{equation*}
P(\lambda) = \det (J(e_2) - \lambda Id) 
= (1 -\alpha h- \lambda)(phC -\beta h + 1 - \lambda). 
\end{equation*}
The eigenvalues are  $\lambda_1 = 1 - \alpha h$ 
and $\lambda_2 = - \beta h + phC + 1$. By $(P_1)$, 
we can see that $\lambda_2 > 1$ and $\lambda_1 < 1$,
and thus $e_2$ is a \emph{saddle point}.

For $e_3 = \left(\dfrac{\beta}{p}, 
\dfrac{\alpha}{p}\left(1 - \dfrac{\beta}{pC}\right)\right)$, 
the Jacobian matrix is
$$ 
J(e_3) 
= 
\left[ 
{\begin{array}{cc}
1 -\dfrac{\alpha h \beta}{pC}  & -\beta h \\ 
\alpha h \left(1 - \dfrac{\beta}{pC}\right) & 1 \\
\end{array} } 
\right]
$$
and
\begin{equation*}
P(\lambda) = \det (J(e_3) - \lambda Id) 
=  \left(1 - \dfrac{\alpha \beta h}{pC}  - \lambda \right)
(1 -  \lambda) + \alpha \beta h^2  \left(1 - \dfrac{\beta}{pC}\right). 
\end{equation*}
From Schur--Cohn criterion for quadratic polynomials $P(x)$,  
if $P(1) > 0$,  $P(-1) > 0$, and  $\mid P(0) \mid < 1$, 
then all the roots  are inside  the unit circle 
(see \cite{Elaydi, Elaydi01}).
Here we have $P(1) = \alpha \beta h^2 \left(1 - \frac{\beta}{pC}\right)$, 
so by $(P_1)$ we can conclude that $P(1)>0$; while 
$$
P(-1) = 2 \left(2 - \dfrac{\alpha \beta h}{pC}\right) 
+ \alpha \beta h^2 \left(1 - \dfrac{\beta}{pC}\right)
$$
and by $(P_1)$, since  
$\dfrac{\beta}{pC} < 1 
\Leftrightarrow 2 - \dfrac{\alpha \beta h}{pC} > 2 - \alpha h$, 
one has $P(-1) >0$. Moreover, 
$$
P(0) = 1 - \dfrac{\alpha \beta h}{pC} 
+ \alpha \beta h^2 \left(1 - \dfrac{\beta}{pC}\right).
$$
We need to show that $P(0) > -1 $. By $(P_1)$, and since 
$$
1 - \dfrac{\alpha \beta h}{pC} > 1 - \alpha h> 1-\beta h>0,
$$  
we have 
$$
P(0)=1 - \dfrac{\alpha \beta h}{pC} 
+ \alpha \beta h^2 \left(1 - \dfrac{\beta}{pC}\right)
>1-\beta h +\alpha \beta h^2 \left(1 - \dfrac{\beta}{pC}\right)>0 >-1
$$
and to have $P(0)<1$ we need to show that 
\begin{equation} 
\label{anulamento}
- \frac{\alpha \beta}{pC}h + \alpha \beta 
\left(1 - \frac{\beta}{pC}\right) h^2 < 0.
\end{equation}
Because \eqref{anulamento} is a polynomial $g(h)$
of degree $2$ with roots
\begin{equation*}
h_1 = 0
\quad  \vee \quad 
h_2 = \dfrac{1}{pC - \beta}, 
\end{equation*}
we have $P(0) <1$ as long as $0 < h < h_2$.
Therefore, we have shown that $ \mid P(0) \mid < 1$ 
as long as $0< h < h_2$ and, in this case, $e_3$ 
is a \emph{sink} or \emph{asymptotically stable}.

Under Euler's numerical scheme, the conservation law 
is not straightforward and we need to impose some 
conditions to have dynamical consistency with the continuous model. 
In order for $e_{3}$ to be asymptotically stable, the step-size 
must be smaller than $h_{2}= \dfrac{1}{pC - \beta} $. By $(P_{1})$, 
all parameters are less than 1, so it is not a difficult condition 
to be attained.


\subsection{Graphical analysis}

For the numerical scheme of Euler, we take
the time interval to be $[0, 300]$ and the 
step-size as $h = 0.25$.

In Figure~\ref{fig3}, some solutions of \eqref{EE}, 
for different initial conditions, are plotted.
\begin{center}
\begin{figure}[ht!]
\begin{center}
\includegraphics{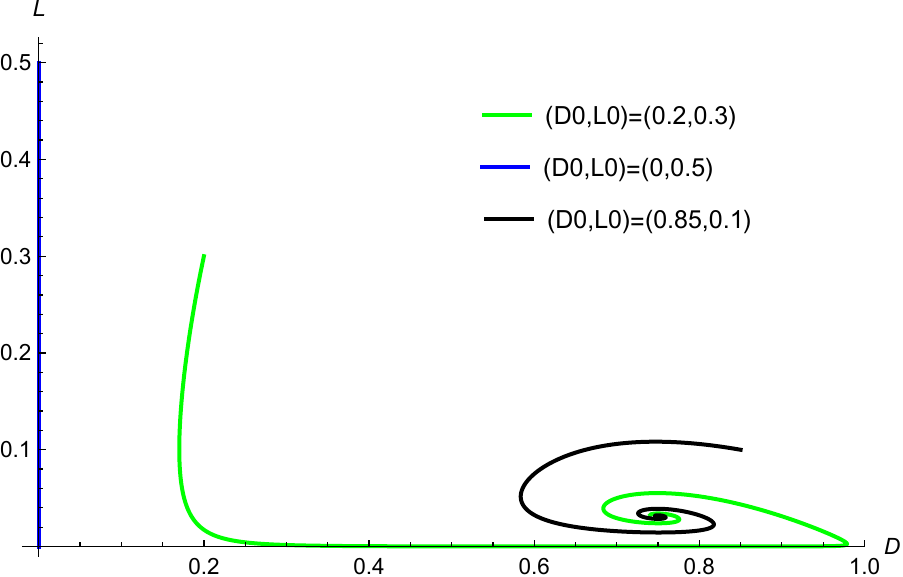}
\caption{Solutions to Euler's discrete model \eqref{EE} 
with different initial conditions.\label{fig3}}
\end{center}
\end{figure}   
\end{center}
Figure~\ref{fig3} is similar to Figure~\ref{fig2}.

In Figure~\ref{fig8}, we compare the solution 
of the continuous model \eqref{banc}
with the one obtained by the Euler method 
with initial conditions $(D_0, L_{0})=(0.2, 0.3)$. 
There are only a few mild differences between the plots,
which are qualitatively the same.
\begin{center}
\begin{figure}[ht!]
\begin{center}
\includegraphics[width=9 cm]{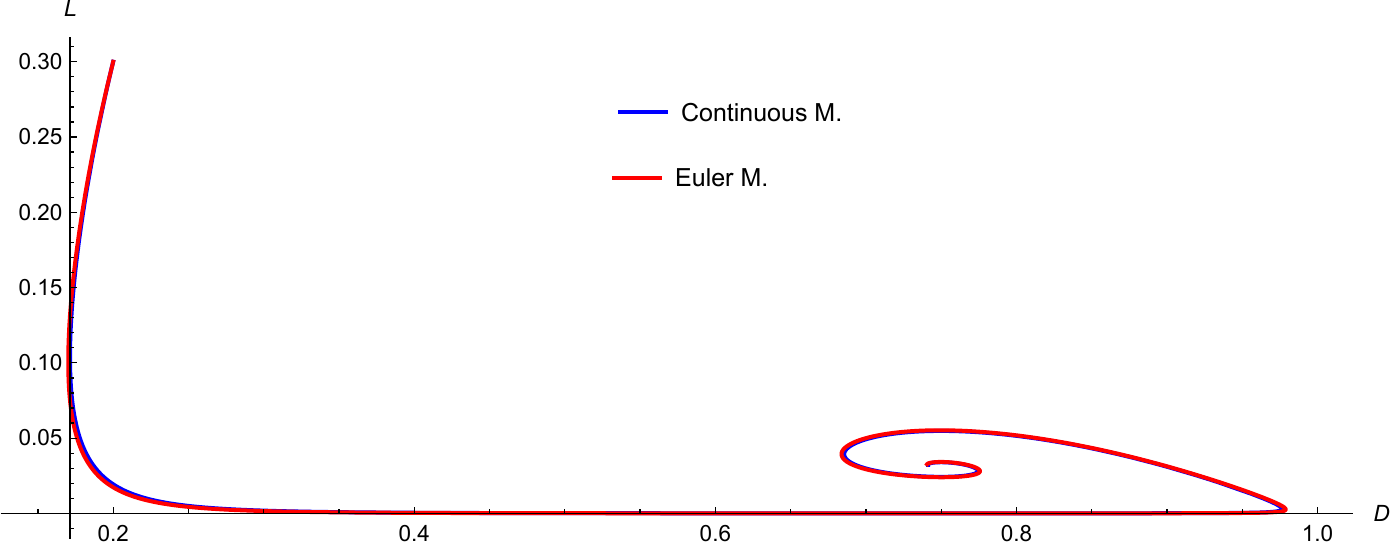}
\caption{Euler's method compared with the continuous model.\label{fig8}}
\end{center}
\end{figure}   
\end{center}


\section{Mickens' Numerical Scheme}
\label{sec4}

The nonstandard finite difference numerical scheme (NSFD), 
started by \cite{Mick94}, is based on Mickens' work  
\cite{Mick02, Mick05, Mick07, Mick13}. Is was created 
with the goal of solving some problems produced by Euler's method, 
namely numerical instabilities and the difficulty of showing 
the non-negativity of the solutions for physical models 
or dynamical inconsistency \cite{Mick02}. 

Mickens' numerical scheme has two main rules \cite{Mick02}. 
The first is: the derivative is approximated by 
$$
\dfrac{dx}{dt} 
\approx \dfrac{x_{k+1} - \varphi(h) x_k}{\psi(h)}, 
\quad h = \bigtriangleup t,
$$	
where $h$ is the step-size, and $\varphi(h)$ and $\psi(h)$ satisfy
$$ 
\varphi(h) = 1 + O(h^2) 
\quad \text{ and } \quad 
\psi(h) = h + O(h^2).
$$
	
The numerator function $\varphi(h)$ and the denominator function $\psi(h)$ may take
different forms. Generally,  $\varphi(h)  = 1$, but it can be different, 
for instance, $\varphi(h) = \cos(\lambda h)$; while $\psi(h)$ can be, for example, 
$\psi(h) = \dfrac{1 - e^{-\lambda h}}{\lambda}$, where $\lambda$ 
is a parameter that appears in the model. 

The second main rule of Mickens' numerical scheme is: 
the linear and nonlinear terms may require a non-local representation. 
For examples of such non-local representations see, e.g., 
\cite{Mick02,Mick05,Mick07}.


\subsection{Model discretization}	

Applying the Mickens rules,
\begin{align*}
\begin{cases}
\dot{D}(t) 
& \approx \dfrac{D((i+1)h) - D(ih)}{\phi(h)} 
= \dfrac{D_{i+1} - D_i}{\phi(h)}, \\ 
\dot{L}(t) 
& \approx \dfrac{L((i+1)h) - L(ih)}{\phi(h)} 
= \dfrac{L_{i+1} - L_i}{\phi(h)}.
\end{cases}	
\end{align*}
By \cite{Mick13}, if the populations have to satisfy a conservation law, 
then the denominator function is obtained by the conservation law 
of the continuous model \eqref{int}. We have seen that
\begin{equation*}
\dot{W} = \left(\dfrac{\alpha + 4 \beta }{4\beta}\right)M - \beta W,
\end{equation*}
so the denominator function is chosen to be
$\phi(h) = \dfrac{1 - e^{-\beta h}}{\beta}$. 
For simplification in writing, in this work
we denote $\phi := \phi(h)$.

The discrete model obtained from Mickens method is then given by
\begin{align}
&\begin{cases}
\dfrac{D_{i+1} - D_i}{\phi} 
&= \alpha D_i \left(1  - \dfrac{ D_{i+1}}{C} \right) - p D_{i+1}L_i,\\
\dfrac{L_{i+1} - L_i}{\phi} & = pD_{i+1} L_i - \beta L_{i+1},
\end{cases} \nonumber \\
& \Leftrightarrow
\begin{cases}
\label{mick}
D_{i+1} &= \dfrac{(\alpha \phi  + 1)D_i}{1 
+ p \phi L_i + \dfrac{\alpha \phi D_i}{C}},\\
L_{i+1} &= \dfrac{(p \phi D_{i+1} +1) L_i}{1 + \beta \phi},
\end{cases} 
\end{align}
for $i\geq 0$.
The explicit discrete model is
\begin{equation}
\begin{cases}
\label{micke}
D_{i+1} &= \dfrac{(\alpha \phi  + 1)D_i}{1 
+ p \phi L_i + \dfrac{\alpha \phi D_i}{C}},\\
& \\
L_{i+1} &= \left(\dfrac{p \phi (\alpha \phi  + 1)D_i }{1 
+ p \phi L_i + \dfrac{\alpha \phi D_i}{C}} +1\right)\dfrac{L_i}{1 + \beta \phi},
\end{cases} 
\end{equation}
for $i\geq 0$.


\subsection{Non-negativity of solutions and conservation law}

If the initial conditions are non-negative, by $(P_1)$, 
and the direct observation of \eqref{mick}, we have the 
non-negativity of the solutions. In the next result, 
for practical reasons, we consider $C = 1$. 

\begin{theorem}
Under hypotheses $(P_1)$ and $(P_2)$, 
if $C=1$ and $\xi=1+ \alpha \phi$, then
\begin{equation*}
\Omega = \left\{(D,L) \in (\mathbb{R}_{0}^{+})^2_+ 
: 0 \leq D_n \leq 1 \quad \text{and} \quad
W_n \leq 1 \leq \dfrac{4\alpha^2 +  \xi \beta^2}{4\alpha\beta} \right\},
\end{equation*}
where $W_{n}=D_{n}+L_{n}$.
\end{theorem}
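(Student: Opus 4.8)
The plan is to establish in turn the several ingredients hidden in the statement: non-negativity, the bound $D_n\le 1$, the conservation bound $W_n\le 1$, and finally the purely parametric inequality $1\le\frac{4\alpha^2+\xi\beta^2}{4\alpha\beta}$. Non-negativity I would dispatch first, exactly as announced just before the theorem: in the explicit scheme \eqref{micke} every coefficient is positive under $(P_1)$ and the denominators $1+p\phi L_i+\alpha\phi D_i$ and $1+\beta\phi$ are strictly positive, so a trivial induction starting from $(P_2)$ yields $D_n,L_n\ge 0$ for all $n$.

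For $D_n\le 1$ (recall $C=1$) I would argue by induction using the \emph{implicit} form \eqref{mick} rather than the explicit one, since it linearises the inequality. Assuming $D_i\le 1$, the inequality $D_{i+1}\le 1$ is equivalent, after clearing the positive denominator, to $(\alpha\phi+1)D_i\le 1+p\phi L_i+\alpha\phi D_i$, i.e.\ to $D_i\le 1+p\phi L_i$; this holds because $D_i\le 1\le 1+p\phi L_i$ by the already-established non-negativity of $L_i$. The base case is $D_0\le 1$, which is part of membership in $\Omega$.

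For the conservation bound I would add the two implicit equations of \eqref{mick}, obtaining $\frac{W_{i+1}-W_i}{\phi}=\alpha D_i(1-D_{i+1})-\beta L_{i+1}$, and then substitute $L_{i+1}=W_{i+1}-D_{i+1}$ to isolate $W_{i+1}$:
\[
W_{i+1}(1+\beta\phi)=W_i+\phi\bigl[\alpha D_i(1-D_{i+1})+\beta D_{i+1}\bigr].
\]
The crux is bounding the bracketed forcing term. Writing it as $\alpha D_i+D_{i+1}(\beta-\alpha D_i)$ and using $D_i\le 1$ together with $\alpha<\beta$ from $(P_1)$, the coefficient $\beta-\alpha D_i$ is positive, so the term is increasing in $D_{i+1}\in[0,1]$ and attains its maximum $\beta$ at $D_{i+1}=1$. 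This gives $W_{i+1}\le\frac{W_i+\beta\phi}{1+\beta\phi}$, a one-step contraction toward the fixed point $1$; hence $W_i\le 1\Rightarrow W_{i+1}\le 1$, and induction from $W_0\le 1$ closes the argument. The appearance of $\frac{4\alpha^2+\xi\beta^2}{4\alpha\beta}$ in the statement is the equilibrium value read off from a looser estimate of the same recursion (bounding the forcing term by completing squares rather than using the exact maximum $\beta$), and I would finish by checking the parametric inequality $1\le\frac{4\alpha^2+\xi\beta^2}{4\alpha\beta}$, which after clearing $4\alpha\beta>0$ reduces to $0\le(2\alpha-\beta)^2+\alpha\phi\beta^2$ and is therefore automatic; in particular this confirms that the looser equilibrium bound is consistent with the sharper invariant $W_n\le 1$.

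The main obstacle, and the only genuinely delicate point, is the mixed time indices in the nonlinear term $\alpha D_i(1-D_{i+1})$ produced by Mickens' nonlocal discretisation: unlike the Euler case, where the forcing depends on a single $D_n$, here one must feed the already-proven invariant $0\le D_{i+1}\le 1$ into the estimate and exploit $\alpha<\beta$ to guarantee monotonicity in $D_{i+1}$. Everything else is routine algebra and a linear one-step induction.
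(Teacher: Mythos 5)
Your proof is correct, but it takes a genuinely different route from the paper's at both key steps, and in fact yields a sharper conclusion. For $D_n\leq 1$ the paper bounds $D_{n+1}<\frac{(1+\alpha\phi)D_n}{1+\alpha\phi D_n}$ and appeals to this monotone map's limit being $1$; your induction through the implicit form \eqref{mick}, reducing $D_{i+1}\leq 1$ to $D_i\leq 1+p\phi L_i$, is cleaner and makes explicit the base case $D_0\leq 1$, which the paper's limit argument quietly glosses over. For the sum $W_n$, the paper does \emph{not} maximize the forcing term $\alpha D_n+D_{n+1}(\beta-\alpha D_n)$ exactly as you do: it instead inserts $D_n\geq D_{n+1}/\xi$ (from $D_{n+1}\leq\xi D_n$ with $\xi=1+\alpha\phi$) to get $\frac{W_{n+1}-W_n}{\phi}\leq\alpha D_n+\beta D_{n+1}\left(1-\frac{\alpha}{\xi\beta}D_{n+1}\right)-\beta W_{n+1}$, maximizes the logistic-type expression $f(D)=\beta D\left(1-\frac{D}{K}\right)$ with $K=\frac{\xi\beta}{\alpha}$ at the value $\frac{K\beta}{4}$, and iterates the resulting affine recursion to obtain the asymptotic ceiling $\frac{4\alpha^2+\xi\beta^2}{4\alpha\beta}$ --- which is exactly where the constant in the statement comes from, as you correctly guessed. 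Your exact maximization over $D_{i+1}\in[0,1]$ (monotonicity from $\beta-\alpha D_i\geq\beta-\alpha>0$, maximum value $\beta$ at $D_{i+1}=1$ independently of $D_i$) yields the contraction $W_{i+1}\leq\frac{W_i+\beta\phi}{1+\beta\phi}$ and hence the forward invariance $W_n\leq 1$ directly, which is strictly stronger than the paper's asymptotic bound and is literally the inequality asserted in $\Omega$; your check that $1\leq\frac{4\alpha^2+\xi\beta^2}{4\alpha\beta}$ reduces to $0\leq(2\alpha-\beta)^2+\alpha\phi\beta^2$ also closes a point the paper only addresses through the informal remark $1<\xi<2$. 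The trade-off: the paper's looser completing-the-square estimate produces the named constant and runs parallel to its continuous and Euler conservation-law proofs, whereas your argument buys a genuine invariant region at the cost of not explaining, except in passing, why that constant appears in the statement at all.
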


\begin{proof}
\smartqed	
By the first equation of \eqref{mick}, we have 
\begin{equation*}
D_{n+1} = \dfrac{(1 + \alpha \phi)D_n}{1 + p \phi L_n + \alpha \phi D_n} 
< \frac{(1 + \alpha \phi)D_n}{1+ \alpha \phi D_n} 
\end{equation*}
for some  $n > 0$ and
\begin{equation*}
\underset{n \to + \infty}{\lim} 
\frac{(1 + \alpha \phi)D_n}{1+ \alpha \phi D_n} =1.
\end{equation*}
Thus, $0 \leq D_{n} \leq 1$. Adding the equations of  \eqref{mick} we have
\begin{align*}
\dfrac{W_{n+1} - W_n}{\phi} 
&= \alpha D_n - \alpha D_n D_{n+1} - \beta L_{n+1} 
- \beta D_{n+1} + \beta D_{n+1}\\
&= \alpha D_n + D_{n+1}( - \alpha D_n + \beta) - \beta W_{n+1}\\
& \leq \alpha D_n + \beta D_{n+1} \left( 1 
- \frac{\alpha}{ \xi \beta} D_{n+1} \right) - \beta W_{n+1}.
\end{align*}
Let $f(D) = \beta D \left(1 - \frac{D}{ K} \right)$,  where 
$K=\frac{\xi \beta}{\alpha}$. Then, $\overline{D} = \frac{K}{2}$ 
is the critical point and maximizer, so that $f(D)$ attains 
its maximum at $f(\overline{D}) = \frac{K \beta}{4}$.
Therefore,
\begin{align*}
\dfrac{W_{n+1} - W_n}{\phi} 
\leq \alpha + \dfrac{K \beta^2}{4} - \beta W_{n+1},\\
W_{n+1} \leq \dfrac{A}{1 +\beta \phi}  + \dfrac{W_n}{1 + \beta \phi}, 
\end{align*}
where $A=\alpha +\frac{\beta K}{4}=\frac{4 \alpha^{2} + \beta^{2} \xi}{4 \alpha}$. 
We conclude that
\begin{align*}
W_{n+1} 
&\leq \left(\dfrac{1}{1 + \beta \phi} \right)^{n+1}W_0 
+ \dfrac{ A \phi}{1 +\beta \phi} \left(\dfrac{1 
- \left(\frac{1}{1+\beta \phi}\right)^n}{1 - \frac{1}{1 + \beta \phi}}\right)\\
& \leq \left(\dfrac{1}{1 + \beta \phi} \right)^{n+1}W_0 
+ \dfrac{A}{\beta} \left(1 - \left[\dfrac{1}{1 + \beta \phi}\right]^n\right)
\end{align*}
and $\underset{n \to + \infty}{\lim}W_{n+1} 
= \dfrac{4\alpha^2 +  \xi \beta^2}{4\alpha \beta}$.
\qed
\end{proof}

Considering hypothesis $(P_{1})$, $\phi(h) = h + O(h^{2})$
and  $\xi =  1+ \alpha \phi$, 
and it is reasonable to consider $1< \xi < 2$. 


\subsection{Stability analysis}	

The stationary points of the discrete system \eqref{mick} are:
$$
\begin{cases}
F(D^*) = D^*\\ 
F(L^*) = L^*\\
\end{cases} 
\Leftrightarrow 
\begin{cases}
D^* = 0,\\ 
L^* = 0,\\
\end{cases} 
\vee 
\begin{cases}
D^* = C, \\ 
L^* = 0,\\
\end{cases} 
\vee 
\begin{cases}
D^* = \dfrac{\beta}{p}, \\ 
L^* = \dfrac{\alpha}{p}\left(1 - \dfrac{\beta}{pC}\right). 
\end{cases}
$$ 
The stationary points are equal to the ones 
of the continuous model \eqref{int}:
$$
e_1 =(0,0), 
\quad e_2 = (C,0), 
\quad e_3 = \left( \dfrac{\beta}{p}, \dfrac{\alpha}{p} 
\left(1 -\dfrac{\beta}{pC}\right)\right).
$$
Once more, to study the local stability, we linearize the system.
The Jacobian matrix of \eqref{micke} is given by
$$ 
J(D,L) 
= 
\left[ 
\begin{array}{ccc}
\dfrac{(1 + \alpha \phi)(1 + p\phi L )}{\left(1 
+ p \phi L + \dfrac{\alpha \phi D}{C}\right)^2}  
& \hspace*{0.1 cm}&- \dfrac{(1 + \alpha \phi) p \phi D}{\left(1 
+ p \phi L + \dfrac{\alpha \phi D}{C} \right)^2}\\ 
&&\\
\dfrac{p \phi L(1+\alpha \phi)(1+p \phi L)}{(1 + \beta \phi)\left(1 
+ p \phi L + \dfrac{\alpha \phi D}{C}\right)^2} 
& & \dfrac{1}{1 + \beta \phi}\left( 1
+ \dfrac{p \phi (1+\alpha \phi)
D \left(1+ \dfrac{\alpha \phi D}{C}\right)}{\left(1 
+ p \phi L + \dfrac{\alpha \phi D}{C}\right)^2}\right) \\
\end{array}  
\right].
$$

For $e_1=(0,0)$, the Jacobian matrix is
$$ 
J(e_1) 
= 
\left[ 
{\begin{array}{cc}
1 + \alpha \phi  & 0 \\ \\
0 & \dfrac{1}{1 + \beta \phi}  \\
\end{array}} 
\right]
$$
and
\begin{equation*}
P(\lambda) = \det (J(e_1) - \lambda Id) 
= (1 + \alpha \phi - \lambda) 
\left(\dfrac{1}{1 + \beta \phi} - \lambda\right). 
\end{equation*}
In this way, the eigenvalues are $ \lambda_1 = 1 + \alpha \phi$ 
and $\lambda_2 = \dfrac{1}{1 + \beta \phi}$. By $(P_1)$, we have $\lambda_1 > 1$ 
and $\lambda_2 < 1$, that is, $e_1$ is a \emph{saddle point}.

For $e_2 = (C,0)$, the Jacobian matrix is
$$ 
J(e_2) 
= 
\left[ 
{\begin{array}{ccc}
\dfrac{1}{1 + \alpha \phi}  
&&  -\dfrac{Cp \phi }{1 + \alpha \phi}\\
&&\\
0 && \dfrac{1 + Cp \phi }{1 + \beta \phi}\\
\end{array}} 
\right]
$$
and the characteristic polynomial is
\begin{equation*}
P(\lambda)  = \det (J(e_2) - \lambda Id) 
= \left(\dfrac{1}{1 + \alpha \phi}  - \lambda \right) 
\left(\dfrac{1 + Cp \phi }{1 + \beta \phi} - \lambda \right). 
\end{equation*}
The eigenvalues are $ \lambda_1 =\dfrac{1}{1 + \alpha \phi}$ 
and $\lambda_2 = \dfrac{1 + Cp \phi}{1 + \beta \phi}$. Once more, 
by $(P_1)$, $\lambda_1 < 1$ and $\lambda_2 > 1$. We conclude that 
$e_2$ is a \emph{saddle point}.

For $e_3 =\left( \dfrac{\beta}{p}, 
\dfrac{\alpha}{p} \left(1 - \dfrac{\beta}{pC} \right) \right)$, we have
$$ 
J(e_3) 
= \left[ 
\begin{array}{cc}
\dfrac{1 + \alpha \phi \left(1 - \frac{\beta}{pC}\right)}{1 + \alpha \phi}  
& -\dfrac{\beta \phi}{1 + \alpha \phi} \\
\dfrac{\alpha \phi \left(1-\frac{\beta}{pC}\right)\left(1 + \alpha 
\phi \left(1 - \dfrac{\beta}{pC}\right) \right)}{(1+ \alpha \phi) (1+ \beta \phi)}
&\dfrac{1}{1+\beta \phi}\left( 1 + \dfrac{\beta 
\phi (1+\frac{\alpha \phi \beta}{pC})}{1+\alpha \phi}\right)
\end{array}
\right]
$$
and, after some computations, it can be seen that
\begin{align*}
P(\lambda)  
&= \det (J(e_3) - \lambda Id)\\
&= \lambda^{2}-\left( 1+ \dfrac{(1+\beta \phi) 
+ \alpha \phi \left(1 - \frac{\beta}{pC}\right)}{(1
+ \beta \phi)(1+ \alpha \phi )}\right) \lambda 
+ \frac{1+\alpha \phi \left(1-\frac{\beta}{pC}\right)}{1+\alpha \phi}. 
\end{align*}
By the Schur--Cohn criterion for quadratic polynomials,  
if $P(1) > 0$, $ P(-1) > 0$ and $ \mid P(0) \mid < 1$, 
then all of its roots are in the unit circle \cite{Elaydi,Elaydi01}. 
By $(P_{1})$,
\begin{itemize}
\item $P(1) = \dfrac{\alpha \phi}{1 + \alpha \phi} 
\dfrac{\beta \phi}{1 + \beta \phi} \left(1 - \dfrac{\beta}{pC}\right) > 0$; 
\item $P(-1) = 2 + \left(\dfrac{1 + \alpha \phi 
\left(1 - \dfrac{\beta}{pC}\right)}{1 + \alpha \phi} \right) 
+ \dfrac{1+\beta \phi+\alpha \phi \left(1 
- \dfrac{\beta}{pC}\right) }{(1 + \alpha \phi)(1 + \beta \phi)} >0$.
\end{itemize} 	
Regarding 
$$
P(0)=\dfrac{1 + \alpha 
\phi \left(1 - \dfrac{\beta}{pC}\right)}{1 + \alpha \phi},
$$
by $(P_{1})$, $0<P(0)<1$. 

So, we conclude that $e_3$ is a \emph{sink} 
or an \emph{asymptotically stable point}.

We proved the boundedness of the solutions
to Mickens' numerical scheme, considering $C=1$, 
and the dynamical consistency with the continuous model 
without further restrictions.


\subsection{Graphical analysis}

Figure~\ref{fig4} presents solutions 
to the Mickens discrete model  \eqref{mick}
with different initial conditions.
\begin{center}
\begin{figure}[ht!]
\begin{center}
\includegraphics[width=6 cm]{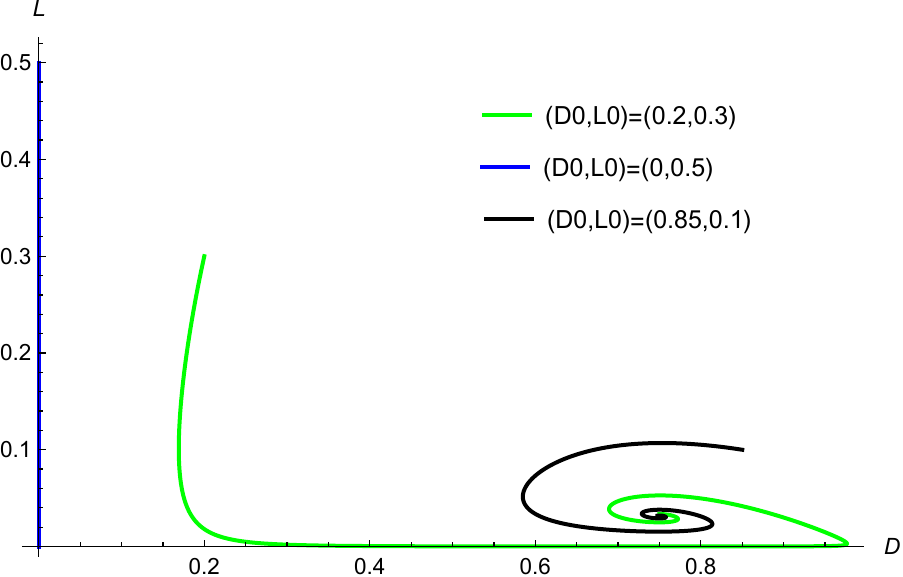}
\caption{Solutions to Mickens discrete model \eqref{mick} 
with different initial conditions. \label{fig4}}
\end{center}
\end{figure}   
\end{center}
Figure~\ref{fig4} is similar 
to Figures~\ref{fig2} and \ref{fig3}. 

Figure~\ref{fig7} compares the three previous models 
\eqref{int}, \eqref{EE} and \eqref{mick}.
As we can see, the differences between them
are only mild.
\begin{center}
\begin{figure}[ht!]
\begin{center}
\includegraphics[width=9 cm]{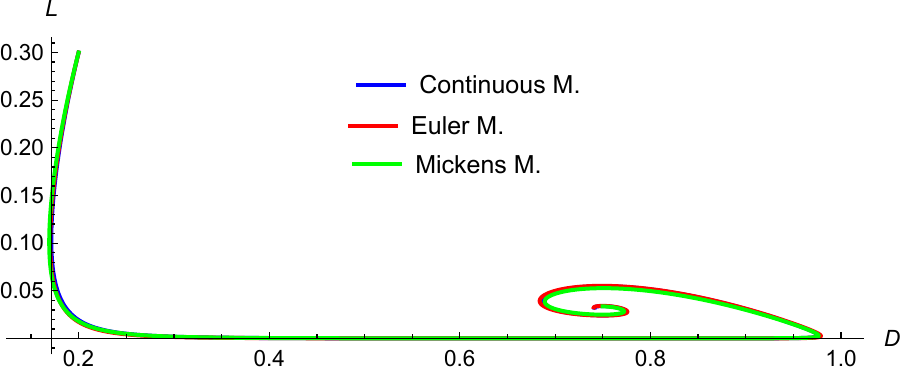}
\caption{Solutions of Mickens' model \eqref{mick} 
versus solutions of previous models 
\eqref{int} and \eqref{EE}. \label{fig7}}
\end{center}
\end{figure}   
\end{center}


\section{Fractional Calculus}
\label{sec5}

The calculus of non-integer order, known as Fractional Calculus (FC),  
is the branch of mathematics that studies the extension of the 
derivative and integral concepts to an arbitrary order, 
not necessarily of a fraction order \cite{R77}.

Considering $f(x) = \frac{1}{2} x^2$, the first and second derivative 
is $f'(x) = x$ and $f''(x) = 1$.  But what about the derivative of order 
$n = \frac{1}{2}$? This was the question that 
the father of fractional calculus, L'H\^{o}pital, considered, 
asking it, by letter, to Leibniz, in 1695. Since then, several mathematicians 
worked in such kind of calculus, namely  Gr\"unwald, Letnikov, Riemann, Liouville, 
Caputo, among many others \cite{Tese,MR3181071}.


\subsection{Preliminaries on FC}

In fractional calculus there are some functions, 
namely the Gamma, Beta and Mittag--Leffler functions, 
that play a crucial role.

\begin{definition}[Gamma function]
Let $z \in \mathbb{C}$. The gamma function is defined by
\begin{equation} 
\label{gama}
\Gamma (z) = \int_{0}^{\infty} e^{-t}t^{z-1} \, dt.
\end{equation} 
\end{definition}

The integral \eqref{gama} converges if  $Re(z) > 0$. 
The Gamma function has the following important property:  
$$
\Gamma (z+1) = z \Gamma (z).
$$

\begin{definition}[Beta function]
Let $z, w \in \mathbb{C}$. The beta function is defined by
\begin{equation}
\label{beta}
B(z,w) = \int_{0}^{1} \tau^{z-1}(1-\tau)^{w-1} \, d \tau
\end{equation} 
for $Re(w) >0$.
\end{definition}

Using the Laplace transform, we can rewrite  \eqref{beta} as
\begin{equation} 
\label{laplace}
B(z,w) = \dfrac{\Gamma (z) \Gamma (w)}{\Gamma (z+w)}.
\end{equation}
By \eqref{laplace} we conclude that $B(z,w) = B(w,z)$.

It is known that $e^z$, 
\begin{equation}
\label{eq:exp}
e^z = \sum_{k=0}^{\infty} \dfrac{z^k}{\Gamma (k+1)},
\end{equation}
has an important role in the integration 
of ordinary differential equations. Similar role has the 
Mittag--Leffler function for fractional differential equations.
The Mittag--Leffler function is a generalization of the exponential
function \eqref{eq:exp}.

\begin{definition}[Mittag--Leffler function]
The Mittag--Leffler function of two parameters is defined by
$$
E_{\alpha,\beta} = \sum_{k=0}^{\infty} \dfrac{z^k}{\Gamma (\alpha k + \beta)},
$$ 
where $\alpha, \beta \in \mathbb{C}$ and $Re(\alpha)>0$. 
\end{definition}

The Mittag--Leffler function is uniformly convergent in every 
compact subset of $\mathbb{C}$. If $\alpha = \beta = 1$, 
then $E_{1,1} (z) = e^z$. 

The fractional calculus has several formulations. 
The two most well-known are the Riemann--Liouville and Caputo
approaches. Here we use fractional derivatives in Caputo's sense.

To define the Riemann--Liouville fractional order integral,
let us begin by considering the integral of first order 
of a function $f$:
\begin{equation}
J^1 f(x) = \int_{0}^{x} f(x) \, dt.
\end{equation}
Analogously, let us consider the integral 
\begin{align*}
J^2 f(x) & = \int_{0}^{x} \int_{0}^{t_1} f(t) \, dt \, dt_1\\
& = \int_{0}^{x} \int_{t}^{x} f(t) \, dt_1 \, dt\\
& = \int_{0}^{x} f(t) \int_{t}^{x} 1 \, dt_1 \, dt\\
& = \int_{0}^{x} (x - t) f(t) \, dt. 
\end{align*}
By induction, it follows that
\begin{equation}
\label{int:int:ord:n}
J^{n+1} f(x) = \dfrac{1}{n!} 
\int_{0}^{x} (x-t)^n f(t) \, dt.
\end{equation}
The fractional order integral is obtained taking 
$n$ in \eqref{int:int:ord:n} to be a real number.

\begin{definition}[Riemann--Liouville fractional order integral]
Let $\alpha > 0$. The Riemann--Liouville fractional order 
integral of order $\alpha$ is given by
\begin{equation*}
J^\alpha f(x) = \dfrac{1}{\Gamma (\alpha)} 
\int_{0}^{x} (x - t)^{\alpha - 1} f(t) \, dt.
\end{equation*}
\end{definition}

The fractional derivative of Riemann--Liouville
is obtained with the help of the arbitrary order integral 
of Riemann--Liouville and the integer order derivative.

\begin{definition}[Riemann--Liouville fractional order derivative]
Let $\alpha > 0$, $m = \lceil \alpha \rceil$, and $v = m- \alpha$.
The Riemann--Liouville fractional order derivative of order $\alpha$
of function $f$ is defined by
\begin{align*}
D^\alpha f(x) & = D^m [J^v f(x)]\\
& = D^m \left(\dfrac{1}{\Gamma (\alpha)} 
\int_{0}^{x} (x - t)^{\alpha - 1} f(t) \, dt \right)\\ 
& = \dfrac{d^m}{dt^m} \left(\dfrac{1}{\Gamma (\alpha)} 
\int_{0}^{x} (x - t)^{\alpha - 1} f(t) \, dt \right)\\
& = \dfrac{1}{\Gamma (\alpha)} \dfrac{d^m}{dt^m} 
\int_{0}^{x} (x - t)^{\alpha - 1} f(t) \, dt.
\end{align*}
\end{definition}

When one uses the Laplace transform of the 
Riemann--Liouville fractional-order derivative,
initial values with integer-order derivatives 
are not obtained. This fact has no physical 
meaning \cite{bra}. For this reason, in many 
applications the Caputo fractional derivative is preferred.

The definition of Caputo's fractional derivative is similar 
to the  Riemann--Liouville definition, the difference being 
the order one takes the operations of Riemann--Liouville 
integration and integer-order differentiation. Indeed,
in Caputo's definition, first we compute the derivative 
of integer order and only then the fractional-order integral.

\begin{definition}[Caputo fractional order derivative]
Let $\alpha > 0$, $m = \lceil \alpha \rceil$,
and $v = m- \alpha$. The Caputo fractional order derivative 
of order $\alpha$ of function $f$ is defined by
\begin{align*}
{^c}D^\alpha f(x) 
&= J^v[D^m f(x)]  
= J^v \left[\dfrac{d^m}{dx^m} f(x) \right] 
= J^v f^{(m)} (x) \\ 
& = \dfrac{1}{\Gamma (v)} 
\int_{0}^{x} (x - t)^{v - 1} f^{(m)} (t) \, dt. 
\end{align*}
\end{definition}

Caputo's definition has two advantages with respect 
to the Riemann--Liouville definition: (i) in applications that 
involve fractional differential equations, the presence 
of initial values are physical meaningful; (ii) the derivative 
of a constant is zero, in contrast with the Riemann--Liouville.


\subsection{Model description}

Let $0 < \sigma < 1$, 
$(D(0), L(0)) \in (\mathbb{R}_{0}^{+})^2$, and
\begin{equation} 
\label{fraci}
\left\{\begin{array}{l}
{^c}D^\sigma D(t) 
= \alpha D \left(1 - \dfrac{D}{C}\right) - pDL,\\ 
{^c}D^\sigma L(t) = pDL - \beta L.
\end{array}\right.
\end{equation}
To write \eqref{fraci} in a compact way, let
\begin{equation*}
(\mathbb{R}_{0}^{+})^2 
= \{X \in \mathbb{R}^2 : X \geqslant 0\},
\end{equation*}
$X(t) = (\begin{matrix}
D(t) & L(t)
\end{matrix})^T$, and
\begin{equation}
\label{FF}
{^c} D^\sigma X(t) = F(X(t)), 
\quad X(0)=(D(0), L(0)) \in (\mathbb{R}_{0}^{+})^2,
\end{equation}
where
\begin{equation} 
\label{CV}
F(X) = \left(\begin{matrix}
\alpha D \left(1 - \dfrac{D}{C}\right) - pDL\\ 
pDL - \beta L
\end{matrix} \right) .
\end{equation}


\subsection{Existence and uniqueness of non-negative solutions}

Before the stability analysis, it is necessary to show the existence 
and uniqueness of non-negative solutions. 
The following lemma is important.

\begin{lemma}[See \cite{Delfim}]
\label{EU}
Let us assume that $F$ in \eqref{CV} satisfies the following conditions:
\begin{itemize}
\item $F(X)$ and $\frac{\partial F(X)}{\partial X}$ 
are continuous in $X \in \mathbb{R}^n$;
\item $\lVert F(X) \lVert \ \leqslant w + \lambda \lVert X \lVert$, 
$\forall X \in \mathbb{R}^n$, where $w$ and $\lambda$ are positive constants. 
\end{itemize}
Then a solution to \eqref{FF}--\eqref{CV} exists and is unique.
\end{lemma}

\begin{proposition}[Existence and uniqueness of non-negative solutions] 
\label{EUF}
There exists only one solution to the IVP  \eqref{FF} in
\begin{equation*}
(\mathbb{R}^+_0)^2 
= \left\{(D,L) \in \mathbb{R}^2 : (D(t), L(t)) \geqslant 0, \forall t >0\right\}.
\end{equation*}
\end{proposition}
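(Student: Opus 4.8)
The plan is to invoke the cited Lemma~\ref{EU} directly, so the task reduces to verifying its two hypotheses for the right-hand side $F$ defined in \eqref{CV} and then separately arguing non-negativity. First I would check the regularity hypothesis: since each component of $F(X)$ is a polynomial in $D$ and $L$ (quadratic at worst), both $F(X)$ and its Jacobian $\frac{\partial F(X)}{\partial X}$ are continuous on all of $\mathbb{R}^2$, so the first bullet of Lemma~\ref{EU} holds immediately.

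The substantive step is the linear growth bound $\lVert F(X) \lVert \leqslant w + \lambda \lVert X \lVert$. This is where I expect the main difficulty, because $F$ contains the genuinely quadratic terms $\frac{\alpha}{C}D^2$ and $pDL$, which do \emph{not} satisfy a global linear bound on all of $\mathbb{R}^2$. The way around this is to exploit the invariant feasible region already established: by Proposition~\ref{prop:01} and the boundedness theorem of Section~\ref{sec2} (and their fractional analogues, conservation law Theorem~\ref{thm:consLaw}), solutions stay in a compact set $\Omega$ on which $D$ and $L$ are bounded. On such a bounded region the quadratic terms are controlled by a linear expression: writing $\lvert D \rvert \le M$ we get $\lvert pDL\rvert \le pM\,\lvert L\rvert$ and $\lvert \frac{\alpha}{C}D^2\rvert \le \frac{\alpha}{C}M\,\lvert D\rvert$, so each nonlinearity is dominated by a constant times $\lVert X\rVert$, yielding the desired $w+\lambda\lVert X\rVert$ estimate with explicit $w,\lambda$ depending on $\alpha,\beta,p,C,M$. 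I would therefore phrase the growth bound as holding on the physically relevant invariant set, which is exactly where the IVP is posed.

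Next I would address non-negativity, which is the part of the statement that goes beyond raw existence–uniqueness. Here I would mirror the argument of Proposition~\ref{prop:01}: rewriting the equations so that $D$ and $L$ appear as multiplicative factors, one sees that along any solution the sign of each component cannot change, since a component reaching zero forces its Caputo derivative there to keep it from crossing into negative values. Concretely, for the $L$-equation the right-hand side vanishes when $L=0$, so the axis $\{L=0\}$ is invariant and $L$ cannot become negative from a positive start; the analogous observation handles $D$. Combined with hypothesis $(P_2)$ that the initial data are positive, this confines the unique solution to $(\mathbb{R}^+_0)^2$.

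Finally I would assemble these pieces: the two hypotheses of Lemma~\ref{EU} give existence and uniqueness of a solution to \eqref{FF}, and the invariance/sign argument shows that this unique solution lies in $(\mathbb{R}^+_0)^2$ for all $t>0$, completing the proof. The cleanest obstacle to flag explicitly is the linear growth condition, since $F$ is only linearly bounded after one restricts to the invariant compact region rather than on all of $\mathbb{R}^2$; making that restriction rigorous (i.e. confirming that the a priori bounds of the continuous case transfer to the Caputo setting via the established conservation law) is the crux of the argument.
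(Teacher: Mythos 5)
Your overall skeleton (verify the two hypotheses of Lemma~\ref{EU}, then argue non-negativity separately) matches the paper, but both of your substantive steps diverge from the paper's proof in ways that leave genuine gaps. The more serious one is non-negativity. You claim that ``a component reaching zero forces its Caputo derivative there to keep it from crossing into negative values,'' but at a touching point $L(t_0)=0$ the right-hand side gives ${^c}D^\sigma L(t_0)=0$, which is \emph{inconclusive}: a vanishing Caputo derivative at one instant does not prevent the trajectory from crossing. Moreover, the ODE-style reasoning you borrow from Proposition~\ref{prop:01} (invariant axes plus ``the sign cannot change'') does not transfer to the Caputo setting: there is no chain rule or separation of variables for ${^c}D^\sigma$, and the fractional flow has memory (no semigroup property), so a solution touching the axis with a nontrivial history is not comparable to the trivial axis solution via uniqueness. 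This is exactly why the paper introduces the auxiliary perturbed system with the extra terms $+\tfrac{1}{k}$: at the putative first exit time $t_0$ the vanishing component then has ${^c}D^\sigma D^*_k(t_0) = \tfrac{1}{k} > 0$ \emph{strictly}, continuity of the Caputo derivative propagates positivity on a short interval $[t_0, t_0+\zeta]$, giving the contradiction, and non-negativity of the original solution follows by letting $k \to \infty$ via Lemma~1 of the cited reference. That perturbation-and-limit device is the missing idea in your proposal; without it your sign argument does not go through.

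On the linear growth bound, you correctly flag that $F$ is not globally linearly bounded on $\mathbb{R}^2$, but your proposed fix is circular within the paper's logic: you invoke the conservation law (Theorem~\ref{thm:consLaw}) to confine solutions to a compact set, yet that theorem is stated and proved \emph{after} Proposition~\ref{EUF} and explicitly presupposes the existence and uniqueness it is supposed to help establish. (The continuous-case bounds of Section~\ref{sec2} concern $\sigma = 1$ and do not automatically transfer.) The paper instead writes the decomposition $F(X) = D\,ZX + BX$ with constant matrices $Z$ and $B$ and estimates $\Vert F(X)\Vert \leq \Vert ZX\Vert + \Vert BX\Vert = (\Vert Z\Vert + \Vert B\Vert)\Vert X\Vert$ in the sup norm --- an estimate that tacitly uses the normalization $|D| \leq C = 1$ rather than any a priori invariant-region theorem, so no forward reference is needed. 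Your instinct that this is the delicate hypothesis is sound (the paper's own step is terse on this point), but as written your argument rests on a result that is downstream of the proposition being proved, and it would need to be restructured --- for example by bounding $|D|$ through the normalization, or by a cutoff argument --- to be logically admissible.
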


\begin{proof}
\smartqed	
The existence and uniqueness of solution follows from Lemma~\ref{EU}.
The vector function \eqref{CV} is a polynomial so it is continuously
differentiable. Let
\begin{equation*}
Z = \left(
\begin{matrix}
-\dfrac{\alpha}{C} &  -p \\ 
0 & p
\end{matrix}\right), 
\quad 
B 
= \left(
\begin{matrix}
\alpha & 0\\ 
0 & -\beta
\end{matrix} \right).
\end{equation*}
Then,
\begin{equation}
\label{F}
F(X) = D ZX + BX.
\end{equation}
Using the $\sup$ norm,
\begin{align*}
\Vert F(X) \Vert & \leq \Vert ZX \Vert + \Vert BX \Vert \\
& = (\Vert Z \Vert + \Vert B \Vert) \, \Vert X \Vert \\ 
& < \epsilon + (\Vert Z \Vert + \Vert B \Vert) \, \Vert X \Vert	
\end{align*}
for some $\epsilon > 0$. So, by Lemma~\ref{EU}, system \eqref{FF} 
has a unique solution. The proof of the non-negative of the solutions  
follows the same idea of \cite{RA}. 
To prove that $(D^*_k(t),L^*_k(t)) \in (\mathbb{R}^+_0)^2$ for all $t \geqslant 0$,
let us consider the following auxiliary fractional differential system:
\begin{equation*} 
\begin{cases}
\begin{array}{l}
{^c}D^\sigma D(t) 
= \alpha D \left(1 - \dfrac{D}{C}\right) - pDL + \dfrac{1}{k},\\ 
{^c}D^\sigma L(t) = pDL - \beta L + \dfrac{1}{k},\\
\end{array}
\end{cases}	
\end{equation*}
with $k \in \mathbb{N}$. By contradiction, let us assume that exists 
a time instant where the condition fails. Let
\begin{equation*}
t_0 = \inf\left\{t>0 : (D^*_k(t),L^*_k(t)) \notin (\mathbb{R}^+_0)^2\right\}.
\end{equation*}
Then $(D^*_k(t_0),L^*_k(t_0)) \in (\mathbb{R}^+_0)^2$  and one 
of the quantities  $D^*_k(t_0)$ or $L^*_k(t_0)$ is zero. 
Let us suppose that  $D^*_k(t_0) = 0$. Then
\begin{equation*}
{^c}D^\sigma D^*_k(t_0) = 0 + \dfrac{1}{k} > 0.
\end{equation*}
By continuity of ${^c}D^\sigma D^*_k$, we conclude that 
${^c}D^\sigma D^*_k ([t_0,t_0 + \zeta]) \subseteq \mathbb{R}^+$ 
so $D^*_k$ is non-negative.  Analogously, we can do the same for  
${^c}D^\sigma L^*_k$, obtaining the intended contradiction. 
It follows by Lemma~1 of \cite{RA}, when $k \to \infty$, that 
$(D^*(t),L^*(t)) \in (\mathbb{R}^+_0)^2$ for all $t \geqslant 0$. 
\qed
\end{proof}


\subsection{The conservation law}

Considering Proposition~\ref{EUF}, 
there exists only one solution to the IVP  \eqref{FF}.
The proof of our next result 
uses some auxiliary results
found in \cite{RA,Delfim}. 

\begin{theorem}[Conservation Law]
\label{thm:consLaw}	
The solution to the IVP  \eqref{FF} is in 
\begin{equation*}
\Omega = \left\{(D,L) \in (\mathbb{R}_{0}^{+})^2: W(t) 
= D(t) + L(t) \leqslant W(0) + \frac{A}{\beta}\right\}, 		 	
\end{equation*}
where $A = \dfrac{\alpha + 4 \beta}{4} M$.
\end{theorem}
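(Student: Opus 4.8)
The plan is to mimic the conservation-law proof already carried out for the continuous integer-order model (Theorem in Section~\ref{sec2}), replacing ordinary differentiation and integration by their Caputo/Riemann--Liouville fractional analogues. First I would introduce the total population $W(t)=D(t)+L(t)$ and compute its Caputo derivative by linearity of ${^c}D^\sigma$:
\begin{equation*}
{^c}D^\sigma W(t) = {^c}D^\sigma D(t) + {^c}D^\sigma L(t)
= \alpha D\left(1-\frac{D}{C}\right) + \beta D - \beta W(t),
\end{equation*}
exactly as in the $\sigma=1$ case, since the nonlinear $\pm pDL$ terms cancel and I add and subtract $\beta D$. Using that $f(D)=\alpha D(1-D/C)$ attains its maximum $\tfrac{\alpha C}{4}$ at $\overline{D}=C/2$, and that by Proposition~\ref{EUF} the solution is non-negative with $D(t)\le M$, I would bound $\alpha D(1-D/C)+\beta D \le \tfrac{\alpha C}{4}+\beta M \le \tfrac{\alpha+4\beta}{4}M = A$. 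This yields the fractional differential inequality
\begin{equation*}
{^c}D^\sigma W(t) \le A - \beta W(t).
\end{equation*}

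The heart of the argument is to convert this Caputo differential inequality into the stated bound on $W(t)$. I would invoke the standard comparison/estimation lemma for Caputo systems cited in \cite{RA,Delfim}: if ${^c}D^\sigma W \le A-\beta W$, then $W$ is dominated by the solution $V$ of the associated fractional \emph{equation} ${^c}D^\sigma V = A-\beta V$ with $V(0)=W(0)$. That solution is expressible through the two-parameter Mittag--Leffler function,
\begin{equation*}
V(t) = W(0)\,E_{\sigma,1}(-\beta t^\sigma)
+ A t^\sigma E_{\sigma,\sigma+1}(-\beta t^\sigma),
\end{equation*}
obtained by applying the Riemann--Liouville integral $J^\sigma$ and solving the resulting renewal-type equation term by term. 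Since $E_{\sigma,1}(-\beta t^\sigma)\in[0,1]$ for $t\ge 0$ and $\beta t^\sigma E_{\sigma,\sigma+1}(-\beta t^\sigma)=1-E_{\sigma,1}(-\beta t^\sigma)\le 1$, this gives $V(t)\le W(0)+\frac{A}{\beta}$ for all $t$, hence $W(t)\le W(0)+\frac{A}{\beta}$, which is precisely the claimed feasible region $\Omega$.

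The main obstacle I anticipate is the rigorous justification of the comparison step: unlike the integer-order case, where one solves the linear ODE explicitly and reads off the limit, the fractional setting requires a genuine comparison principle because the Caputo operator is non-local and the naive ``integrating factor'' trick is unavailable. I would therefore lean on the Mittag--Leffler-type estimation lemmas from \cite{RA,Delfim} rather than attempt an \emph{ad hoc} derivation, taking care that the lemma's hypotheses (continuity of $W$, non-negativity of the comparison coefficients, $0<\sigma<1$) are met here. A secondary subtlety is that the clean limit $\lim_{t\to\infty}W=\tfrac{A}{\beta}$ of the $\sigma=1$ proof becomes, for $0<\sigma<1$, only the uniform bound $W(0)+\tfrac{A}{\beta}$ because the Mittag--Leffler functions decay algebraically rather than exponentially and the transient term $W(0)E_{\sigma,1}(-\beta t^\sigma)$ must be retained; this is exactly why the statement records $W(0)+\frac{A}{\beta}$ rather than $\frac{A}{\beta}$, and I would make sure the bounding constants are tracked accordingly.
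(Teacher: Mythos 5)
Your proposal is correct and follows the same skeleton as the paper's own proof: the same derivation of the fractional differential inequality ${^c}D^\sigma W(t) \leq A - \beta W(t)$ (the $\pm pDL$ terms cancel, $\beta D$ is added and subtracted, and the logistic term is maximized at $\overline{D}=C/2$), and the same concluding step via $0 \leq E_\sigma(-\beta t^\sigma) \leq 1$. Where you genuinely differ is the middle step. The paper applies the Laplace transform directly to the \emph{inequality}, obtaining $W(s) \leq \frac{A}{s(s^\sigma+\beta)} + \frac{s^{\sigma-1}}{s^\sigma+\beta}W(0)$ and inverting term by term to reach $W(t) \leq \frac{A}{\beta}\left(1 - E_\sigma(-\beta t^\sigma)\right) + W(0)E_\sigma(-\beta t^\sigma)$; you instead invoke a fractional comparison principle from \cite{RA,Delfim}, dominating $W$ by the solution $V$ of the comparison equation ${^c}D^\sigma V = A - \beta V$, $V(0)=W(0)$, and your Mittag--Leffler expression for $V$ coincides with the paper's bound through the identity $\beta t^\sigma E_{\sigma,\sigma+1}(-\beta t^\sigma) = 1 - E_{\sigma,1}(-\beta t^\sigma)$. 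Your route is arguably the more careful one: transforming an inequality and then inverting, as the paper does, is legitimate only because the relevant resolvent kernels are non-negative, which is precisely the content of the comparison lemma you cite --- so you have made explicit a step the paper performs formally. One caution: both you and the paper use $D(t) \leq M$ to control the $\beta D$ term, but Proposition~\ref{EUF}, to which you attribute this bound, yields only non-negativity and uniqueness; in the fractional setting $D(t) \leq M$ would itself require a separate comparison argument against the logistic equation. This is a gap you share with, rather than introduce beyond, the paper's own proof, and your explicit flagging of where the comparison machinery is needed is a point in your version's favor.
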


\begin{proof}
\smartqed	
Let
\begin{align*}
{^c}D^\sigma W(t) 
& \leqslant \alpha D(t) (1 - D(t)) - \beta L(t) - \beta D(t) + \beta D(t) \\
& \leqslant \dfrac{\alpha}{4} M + \beta M - \beta W(t)  
\leqslant \left(\dfrac{\alpha + 4 \beta}{4}\right)M - \beta W(t).
\end{align*}
It is known that 
\begin{equation*}
{^c} D^\sigma W(t) = J^{1-\alpha} \dot{W}(t) 
\quad \text{and} \quad 
\phi_\alpha (t) = \dfrac{t^{\alpha -1}}{\Gamma(\alpha)}, 
\quad \text{ for } t > 0.
\end{equation*}
Then
\begin{align*}
\mathcal{L}\{{^c}D^\sigma W(t)\} 
& = \mathcal{L}\{\phi_{1-\alpha}(t) \ast \dot{W}(t)\}  
= \mathcal{L}\{\phi_{1-\alpha}(t)\} \cdot \mathcal{L}\{\dot{W}(t)\}\\
& = s^{\alpha-1} (sW(s) - W(0))= s^\alpha W(s) - s^{\alpha-1} W(0),
\end{align*}
so that
\begin{align*}
{^c}D^\sigma W(t) + \beta W(t) \leq A  
&\Leftrightarrow \mathcal{L}\{{^c}D^\sigma W(t) 
+ \beta W(t)\} \leq \mathcal{L}\left\{A\right\} \\
& \Leftrightarrow s^\alpha W(s) - s^{\alpha-1}W(0) 
+ \beta W(s)  \leq \dfrac{A}{s} \\
& \Leftrightarrow W(s)  \leq \dfrac{A}{s(s^\alpha + \beta)} 
+ \dfrac{s^{\alpha-1}}{s^\alpha + \beta} W(0). 
\end{align*}
Therefore, 
\begin{align*}
W(t) &= \mathcal{L}^{-1} \{W(s)\} \\
& \leq \mathcal{L}^{-1} \left\{ \dfrac{A}{s(s^\alpha + \beta)} \right\} 
+ \mathcal{L}^{-1} \left\{\dfrac{s^{\alpha -1}}{s^\alpha + \beta}W(0)\right\} \\
&= A \mathcal{L}^{-1} \left\{\dfrac{1}{s(s^\alpha + \beta)} \right\} 
+ W(0) \mathcal{L}^{-1} \left\{\dfrac{s^{\alpha-1}}{s^\alpha + \beta}\right\} \\
&= \dfrac{A}{\beta} (1 - E_\alpha(-\beta t^\alpha)) + W(0) E_\alpha(-\beta t^\alpha).
\end{align*}
Since $0 \leqslant E_\alpha(-\beta t^\alpha) \leqslant 1$, 
we have $W(t) \leqslant W(0) + \frac{A}{\beta}$.
\qed
\end{proof}


\subsection{Stability analysis}
\label{sub:sec:SA}

The equilibrium points of the fractional system \eqref{fraci} are
$$
\left\{
\begin{array}{ll}
{^c}D^\alpha D(t) = 0\\ 
{^c}D^\alpha L(t) = 0\\
\end{array}\right. 
\Leftrightarrow 
\left\{
\begin{array}{ll}
D = 0,\\
L = 0,\\
\end{array}\right.
\vee 
\left\{
\begin{array}{ll}
D = C, \\ 
L = 0,\\
\end{array}\right. 
\vee 
\left\{
\begin{array}{ll}
D = \dfrac{\beta}{p}, \\ 
L = \dfrac{\alpha}{p} \left(1 - \dfrac{\beta}{pC}\right).
\end{array}\right.
$$ 
They are similar to the ones of the continuous model \eqref{banc}: 
$$
e_1 =(0,0), 
\quad e_2 = (C,0), 
\quad e_3 = \left(\dfrac{\beta}{p}, 
\dfrac{\alpha}{p}\left(1 - \dfrac{\beta}{pC}\right)\right),
$$
and the study of their local stability is equal to the continuous model.
Precisely:
\begin{itemize}
\item $e_1$  and $e_{2}$ are \emph{saddle points};
\item $e_3$ is a \emph{sink} or an \emph{asymptotically stable point}.
\end{itemize} 


\subsection{Graphical analysis}
\label{sub:sec:GA}

We now present some graphics obtained by the modified trapezoidal method, 
for $\sigma = 0.95$, step-size $h = 0.25$, and the time interval $[0,300]$.

In Figure~\ref{fig5} we observe that the orbit with initial conditions   
$(D_0,L_{0}) =(0.2, 0.3)$ tend to the equilibrium point faster 
than the other methods. The same happens 
with $(D_{0}, L_{0})=(0.85,0.1)$. 
\begin{center}
\begin{figure}[ht!]
\begin{center}
\includegraphics[width=7cm]{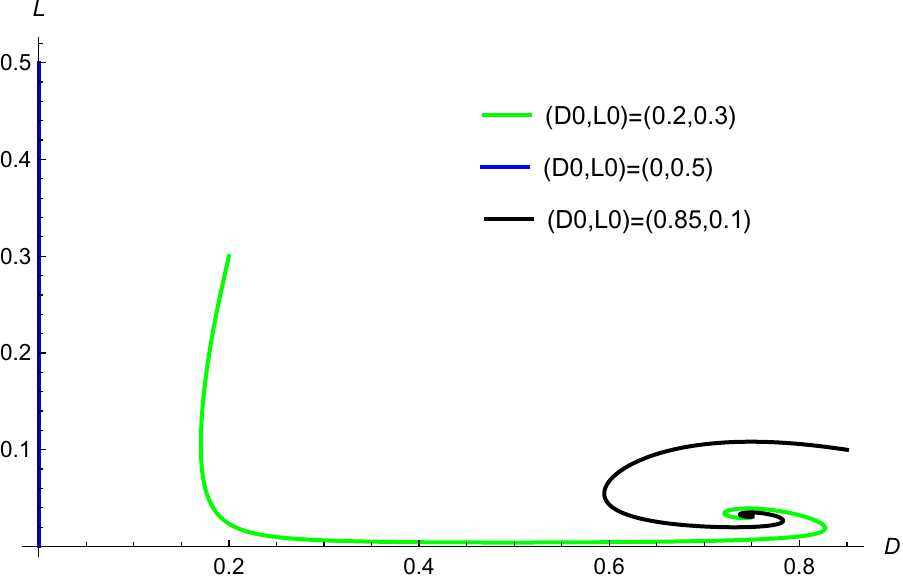}
\caption{Solutions to the fractional model \eqref{fraci}
with different initial conditions. \label{fig5}}
\end{center}
\end{figure}   
\end{center}

Figure~\ref{fig6} shows that the equilibrium point 
is attained faster and Figure~\ref{fig10}  
illustrates, graphically, that when $\sigma$ tends to $1$ 
the solution of \eqref{fraci} tends to the solution
of the continuous model \eqref{banc}, showing that the modified 
trapezoidal method is a good discretization method to our problem.
\begin{center}
\begin{figure}[t]
\begin{center}
\includegraphics[width=9cm]{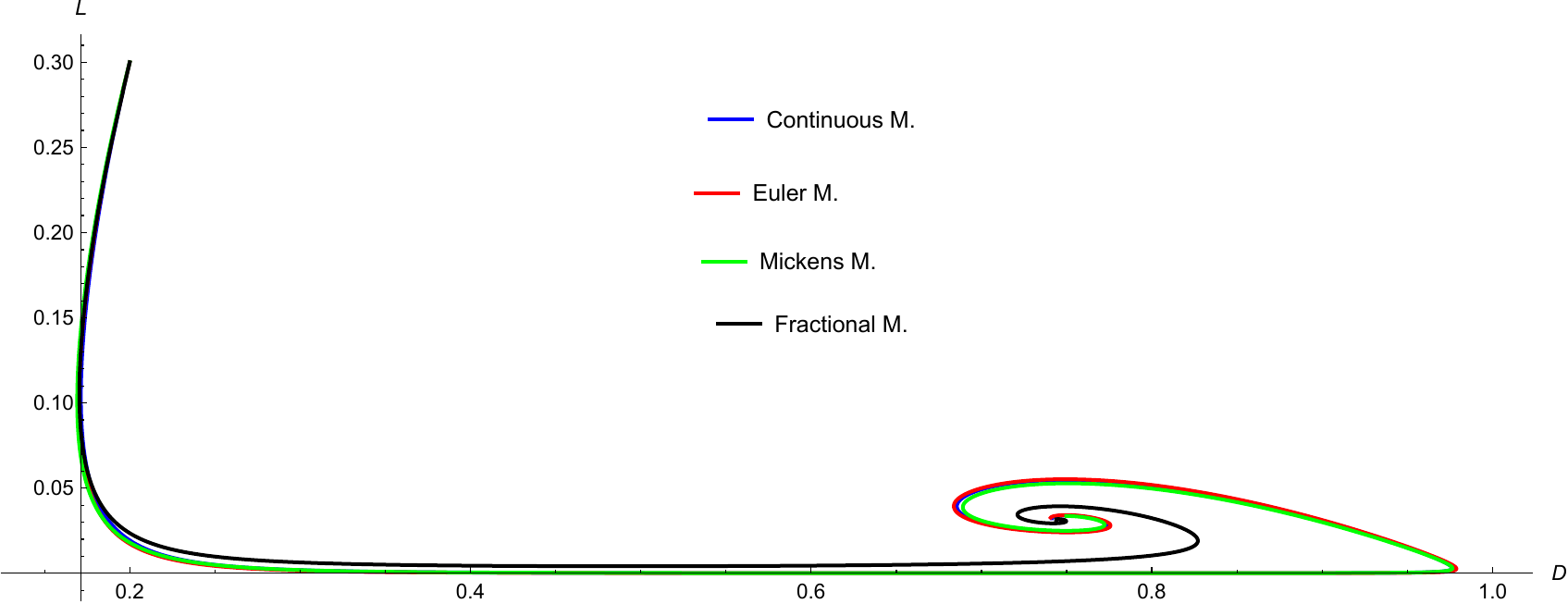}
\caption{Solution to fractional model compared 
with solutions to continuous and discrete Euler's 
and Micken's models. \label{fig6}}
\end{center}
\end{figure}   
\end{center}
\begin{center}
\begin{figure}[t]
\begin{center}
\includegraphics[width=9cm]{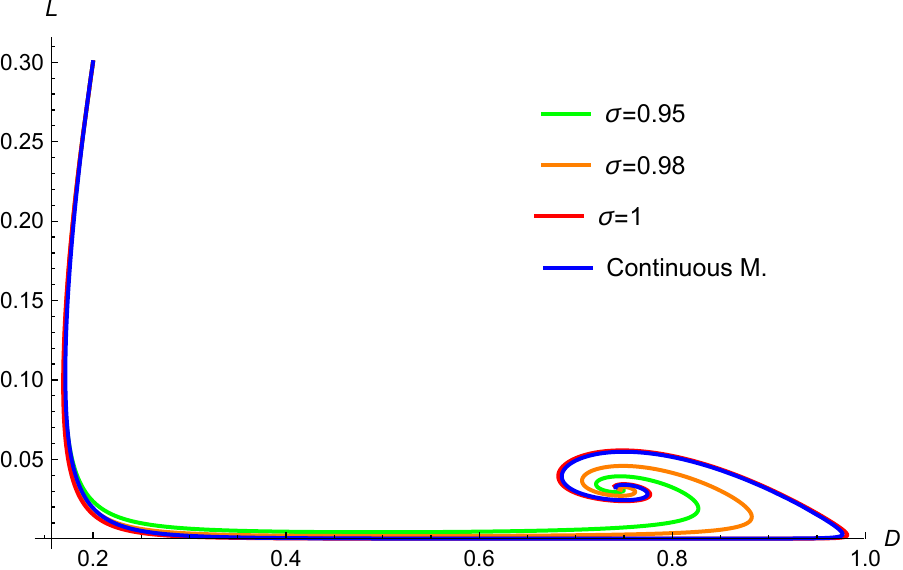}
\caption{Solution to fractional system \eqref{fraci}, 
for different values of $\sigma$, compared with the solution 
to the continuous model \eqref{banc}. \label{fig10}}
\end{center}
\end{figure}   
\end{center}


\section{Conclusion}
\label{sec6}

In this work we considered a predator-prey Lotka--Volterra 
type model and analyzed it through different methods. 
The use of the logistic function, to explain the growth 
of the prey population in the absence of predation, 
and the study of the model from different perspectives,  
allow us to know when extra conditions are needed 
to achieve dynamical consistency. 
We were able to apply Euler numerical scheme, under some extra conditions, 
namely $1-\beta h > 0$ and $ h \in \, ]0, \frac{1}{pC -\beta}[$. 
Regarding Mickens' nonstandard finite difference numerical scheme, 
the proofs were carried out using the carrying capacity $C=1$. 
Writing the model using fractional calculus,
and using the same initial conditions, our graphical analysis allowed us 
to conclude that with $\sigma=0.95$ the equilibrium point is attained faster. 


\section*{Acknowledgements}

The authors were partially supported by 
the Portuguese Foundation for Science and Technology (FCT):
Vaz through the Center of Mathematics and Applications 
of \emph{Universidade da Beira Interior} (CMA-UBI), 
project UIDB/00212/2020; Torres through
the Center for Research and Development in Mathematics 
and Applications (CIDMA), grants UIDB/04106/2020
and UIDP/04106/2020, and within the project ``Mathematical Modelling 
of Multi-scale Control Systems: Applications to Human Diseases'' (CoSysM3), 
reference 2022.03091.PTDC, financially supported by national funds (OE) 
through FCT/MCTES.



\end{document}